\newcounter{parag}[subsection]
\newcounter{paraga}[subsection]
\renewcommand{\theparaga}{{\bf\arabic{paraga}.}}
\newcommand{\paraga}{\medskip \addtocounter{paraga}{1} 
\noindent{\theparaga\ } }
\newcounter{pparag}
\newtheorem{thm}{Theorem}
\newtheorem{lemma}{Lemma}[section]
\def\text#1{\,\hbox{#1}\;}
\def\al{\alpha}
\def\Ga{{\Gamma}}
\def\de{\delta}
\def\De{\Delta}
\def\eps{{\varepsilon}}
\def\ka{\kappa}
\def\la{\lambda}
\def\La{\Lambda}
\def\om{\omega}
\def\Om{\Omega}
\def\sig{{\sigma}}
\def\Sig{{\Sigma}}
\def\th{{\theta}}
\def\ph{\varphi}
\def\ze{{\zeta}}
\def\beps{{\boldsymbol \varepsilon}}
\def\bphi{{\boldsymbol \phi}}
\def\jA{{\mathscr A}}
\def\jB{{\mathscr B}}
\def\jC{{\mathscr C}}
\def\jD{{\mathscr D}}
\def\jF{{\mathscr F}}
\def\jH{{\mathscr H}}
\def\jI{{\mathscr I}}
\def\jO{{\mathscr O}}
\def\jP{{\mathscr P}}
\def\jS{{\mathscr S}}
\def\jT{{\mathscr T}}
\def\jV{{\mathscr V}}
\def\jX{{\mathscr X}}
\def\jZ{{\mathscr Z}}
\def\A{{\mathbb A}}
\def\N{{\mathbb N}}
\def\R{{\mathbb R}}
\def\T{{\mathbb T}}
\def\Z{{\mathbb Z}}
\def\Im{{\rm Im\,}}
\def\Sup{\mathop{\rm Sup\,}\limits}
\def\Max{\mathop{\rm Max\,}\limits}
\def\Min{\mathop{\rm Min\,}\limits}
\def\dist{{\rm dist\,}}
\def\setm{\setminus}
\def\ov{\overline}
\def\til{\widetilde}
\def\ha{\widehat}
\def\d{\partial}
\def\inv{^{-1}}
\def\pdemi{{\tfrac{1}{2}}}
\def\abs#1{\left\vert#1\right\vert}
\def\norm#1{\Vert#1\Vert}
\def\setm{\setminus}
\def\bS{{\mathcal S}}
\def\bS{{\mathcal S}}
\def\bH{{\mathsf H}}
\def\beq{\begin{equation}}
\def\eeq{\end{equation}}
\def\bu{{\bullet}}
\def\e{{\bf e}}
\def\bH{{\mathsf H}}
\def\Id{{\rm Id}}
\def\beq{\begin{equation}}
\def\eeq{\end{equation}}
\def\bu{{\bullet}}
\def\jC{{\mathscr C}} 
\def\H{{\bf H}} 
\def\bA{{\bf A}}
\def\bchi{{\boldsymbol \chi}}
\def\beps{{\boldsymbol\eps}}
\def\trans{\pitchfork}
\def\Dom{{\rm Dom\,}}
\def\codim{{\rm codim\,}}
\def\ev{{\bf ev\,}}
\def\Homt{}
\def\Hett{}
\def\Domt{}
\def\Ess{{\bf Ess\,}}
\def\cl{}
\def\bA{{\bf A}}
\def\fom{{\rm(FS)}}
\def\fomu{{\rm(FS1)}}
\def\fomd{{\rm(FS2)}}
\def\fet{{\rm(FS)}}
\def\pom{{\rm(PS)}}
\def\pomu{{\rm(PS1)}}
\def\pomd{{\rm(PS2)}}
\def\glu{{\rm(G)}}
\def\Homt{{\rm Homt}}
\def\Hett{{\rm Hett}}
\def\cH{{\mathcal H}}
\def\Ann{{\rm Ann}}
\def\dem{\De^{(-1)}}
\def\invt{^{-\tau}}
\def\ev{{\bf ev}}
\def\codim{{\rm codim\,}}
\def\cY{{\mathcal Y}}
\def\cL{{\mathcal L}}
\def\bY{{\bf Y}}
\def\Tess{{\bf Tess}}
\def\Tan{{\rm Tan}}
\def\cl{{\rm cl}}
\begin{document}

\title{Arnold diffusion for cusp-generic\\ 
nearly integrable convex systems on $\A^3$}

\author{Jean-Pierre Marco
\thanks{Universit\'e Paris 6, 
4 Place Jussieu, 75005 Paris cedex 05.
E-mail: jean-pierre.marco@imj-prg.fr
}}
\date{}

\maketitle

\begin{abstract} 
Using the results of \cite{Mar} and \cite{GM}, we prove the existence of 
``Arnold diffusion orbits'' in cusp-generic nearly integrable {\em a priori} stable systems 
on $\A^3$.

More precisely, we consider perturbed systems of the form $H(\th,r)=h(r)+f(\th,r)$, where $h$ is a 
$C^\ka$ strictly convex and superlinear  function on $\R^3$ and $f\in C^\ka(\A^3)$, $\ka\geq2$. 
We equip  $C^\ka(\A^3)$  with the uniform seminorm 
$$
\norm{f}_\ka=\sum_{k\in\N^6,\ 0\leq \abs{k}\leq \ka}\norm{\d^kf}_{C^0(\A^3)}\leq+\infty
$$
we set
$
C_b^\ka(\A^3)=\big\{f\in C^\ka(\A^3)\mid \norm{f}_\ka<+\infty\big\},
$
and we denote by $\bS^\ka$ its unit sphere. 
Given a ``threshold function'' $\beps_0:\bS^\ka\to [0,+\infty[$, we define the associated $\beps_0$-ball as
$$
\jB^\ka(\beps_0):=
\big\{\eps {\bf f} \mid {\bf f}\in\bS^\ka,\ \eps\in\,]0,\beps_0({\bf f})[\big\},
$$
so that $\jB^\ka(\beps_0)$ is open in $C_b^\ka(\A^3)$ when $\beps_0$ is lower semicontinuous.

Given $h$ as above, an energy $\e>\Min h$ and a finite family
of arbitrary open sets $O_i$ in $\R^3$ intersecting $h\inv(\e)$, we prove 
the existence of a lower semicontinuous threshold function $\beps_0$,
positive on an open dense subset of $\bS^\ka$, such that for $f$ in an open dense subset of 
the associated ball $\jB^\ka(\beps_0)$, the system $H=h+f$ admits orbits intersecting
each open set  $\T^3\times O_i\subset\A^3$.
\end{abstract}

\newpage

\section{The main results}
We denote by $\A^n=\T^n\times\R^n$ the cotangent bundle of the torus $\T^n$.
This paper is the last step of a geometrical proof of the existence of Arnold diffusion for a 
``large subset'' of perturbations of strictly convex integrable systems on $\A^3$. It relies on
the results of \cite{Mar} and \cite{GM}. Two different approaches are developped in \cite{KZ}
and \cite{C}.

\paraga Before giving a precise statement, let us quote the initial formulation of the diffusion conjecture
by V.I. Arnold, from \cite{A94}.
{\em 
``Consider a generic analytic Hamiltonian system close to an integrable one:
$$
H=H_0(p)+\eps H_1(p,q,\eps)
$$
where the perturbation $H_1$ is $2\pi$-periodic in the angle variables
$(q_1,\ldots,q_n)$ and where the nonperturbed Hamiltonian function $H_0$ depends 
on the action variables $(p_1,\ldots,p_n)$ generically. Let $n$ be greater than $2$.

\vskip2mm

{\bf Conjecture.} 
For any two points $p',p''$ on the connected level hypersurface of $H_0$ in the action space,
there exist orbits connecting an arbitrary small neighborhood of the torus $p=p'$
with an arbitrary small neighborhood of the torus $p=p''$, provided that $\eps>0$ is sufficiently 
small and that $H_1$ is generic.''
}
\vskip2mm

\paraga The formulation of the conjecture is rather imprecise and we first have to clarify our framework.
We are concerned with the case $n=3$ only and we restrict  ourselves to finitely differentiable
systems. Moreover, we adopt a setting close to that introduced by Mather in \cite{Mat04}, which we 
now describe with our usual notation. 

Let $(\th,r)$ be the angle-action coordinates on $\A^3$, and let $\la=\sum_{i=1}^3r_id\th_i$ be the
Liouville form of $\A^3$. 
The Hamiltonian systems we consider have the form
\beq\label{eq:hampert}
H(\th,r)=h(r)+f(\th,r),\qquad (\th,r)\in\A^3,
\eeq
where the unperturbed part  $h:\R^3\to\R$ is a $C^\ka$ ($\ka\geq 2$) Tonelli Hamiltonian, that is,
$h$ is strictly  convex with superlinear growth at infinity.  Here we therefore relax the genericity
condition initially imposed by Arnold.

We want to find $C^\ka$ Hamiltonians $H$  which 
admit {\em diffusion orbits} intersecting prescribed open sets in their energy level.
More precisely, we start with a Tonelli Hamiltonian~$h$ and fix an energy $\e>\Min h$,
which is therefore a regular value of $h$ whose level set $h\inv(\e)$ is diffeomorphic to
$S^2$. We then fix a finite collection of arbitrary open sets $(O_i)_{1\leq i\leq i_*}$ in $\R^3$, 
which intersect $h\inv(\e)$. Given  $H\in C^\ka(\A^3,\R)$, a diffusion orbit associated with 
these data is an orbit of the system generated by $H$ which intersects each open set 
$\T^3\times O_i\subset\A^3$.   

Our problem is to prove the existence of a ``large'' set
of perturbations $f$ for which (\ref{eq:hampert}) possesses such diffusion orbits.
We equip  $C^\ka(\A^3,\R)$  with the uniform seminorm 
\beq
\norm{f}_\ka=\Max_{0\leq \abs{k}\leq \ka}\Sup_{x\in\A^3}{\d^kf(x)}\leq+\infty,
\eeq
and we set
\beq
C_b^\ka(\A^3,\R)=\big\{f\in C^\ka(\A^3,\R)\mid \norm{f}_\ka<+\infty\big\},
\eeq
so that $\big(C_b^\ka(\A^3,\R),\norm{\ }_\infty\big)$ is a Banach  algebra. 
Let $\bS^\ka$ and $B^\ka(\rho)$ stand for the unit sphere and the ball with radius 
$\rho$ centered at~$0$ in $C_b^\ka(\A^3,\R)$. 
Given a ``threshold function'' $\beps_0:\bS^\ka\to\R$, one introduces the associated generalized ball:
\beq\label{eq:cuspball}
\jB^\ka(\beps_0):=
\big\{\eps u \mid u\in\bS^\ka,\ \eps\in\,]0,\beps_0(u)[\big\}.
\eeq
Observe that $\jB^\ka(\beps_0)$ is open in $C_b^\ka(\A^3,\R)$ when $\beps_0$ is lower-semicontinuous.

\paraga The main result of this paper is the following.

\begin{thm}\label{thm:main1}
Consider a $C^\ka$ integrable Tonelli Hamiltonian $h$ on $\A^3$.
Fix $\e>\Min h$ together with a finite family of arbitrary open sets $O_1,\ldots,O_m$
which intersect $h\inv(\e)$.  Then for $\ka\geq \ka_0$ large enough, there exists a lower-semicontinuous
function 
$$
\beps_0:\jS^\ka\to\R^+
$$ 
with positive values on an open dense subset of $\jS^\ka$ such that the subset of all $f\in\jB^\ka(\beps_0)$
for which the system
\begin{equation}
H(\th,r)=h(r)+ f(\th,r)
\end{equation}
admits an orbit which intersects each $\T^3\times O_k$ is open and dense in $\jB^\ka(\beps_0)$.
\end{thm}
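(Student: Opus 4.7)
The plan is to construct, for each cusp-generic perturbation, a chain of normally hyperbolic invariant cylinders running along a prescribed arc on the energy sphere $h\inv(\e)$, and then to shadow this chain using the results of \cite{GM}. Because $h$ is Tonelli, $h\inv(\e)$ is a smooth $2$-sphere and in particular is path-connected; fix points $r_i\in O_i\cap h\inv(\e)$ and a piecewise-smooth arc $\Ga\subset h\inv(\e)$ visiting each $r_i$, chosen so that $\Ga$ decomposes into finitely many subarcs of three types: (i) arcs of Diophantine nonresonant frequencies, (ii) arcs lying (up to a small thickening) on a single simple-resonance surface $\{k\cdot\nabla h=0\}$, and (iii) small neighbourhoods of isolated double-resonance points.

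\medskip

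\noindent\textbf{Geometric skeleton from \cite{Mar}.} Applied to such a $\Ga$, the main result of \cite{Mar} produces a lower-semicontinuous threshold function $\beps_0:\bS^\ka\to[0,+\infty[$, positive on an open dense subset of $\bS^\ka$, together with an open dense subset of $\jB^\ka(\beps_0)$ on which the Hamiltonian $H=h+f$ admits a finite chain of normally hyperbolic weakly invariant cylinders $C_1,\ldots,C_N$ whose action-space projections cover a tubular neighbourhood of $\Ga$, each $C_j$ carrying transverse homoclinic intersections and linked to its neighbour $C_{j+1}$ by a scattering map enjoying the required transversality and twist behaviour. At the double-resonance points of $\Ga$ these cylinders come from the essentially $2$-degree-of-freedom singular normal form.

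\medskip

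\noindent\textbf{Shadowing via \cite{GM}.} The chain obtained above satisfies exactly the geometric hypotheses of \cite{GM}, from which one extracts an orbit of $H$ whose action component stays in a tubular neighbourhood of $\Ga$ and crosses every section transverse to $\Ga$ at $r_i$. In particular this orbit enters each $\T^3\times O_i\subset\A^3$, which is the conclusion of Theorem~\ref{thm:main1}. Openness of the set of ``good'' $f$ inside $\jB^\ka(\beps_0)$ is inherited from the openness in the $C^\ka$ topology of the transversality of stable and unstable manifolds required at each junction, while density follows from the density of the genericity conditions of \cite{Mar} restricted to each ray $\{\eps u\mid 0<\eps<\beps_0(u)\}$.

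\medskip

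\noindent\textbf{Main obstacle.} The delicate point is not the shadowing step, which is packaged in \cite{GM}, but the coherent assembly in \cite{Mar} of local normal-form descriptions at the junctions between simple-resonance arcs and double-resonance points. The threshold $\beps_0(u)$ must accommodate simultaneously every resonance patch met by $\Ga$, and in each double-resonance patch its size is controlled by a finite set of Fourier coefficients of $u$; this is exactly why a direction-dependent, lower-semicontinuous $\beps_0$ is unavoidable and why $\jB^\ka(\beps_0)$ takes the cusp shape described in the statement.
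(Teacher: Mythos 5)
Your overall architecture (skeleton from \cite{Mar}, shadowing from \cite{GM}) matches the paper's, but you have skipped the step that is in fact the entire technical content of this paper, and as a result your openness--density argument is incorrect.

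The result of \cite{Mar} furnishes, for every $f\in\jB^\ka(\beps_0)$, an \emph{admissible} chain of cylinders. The shadowing result of \cite{GM} does \emph{not} apply to admissible chains: it requires the strictly stronger notion of a \emph{good} chain, in which each cylinder is a good cylinder (no essential circle fixed by the homoclinic map in the $\fomu$ case, existence of splitting arcs/domains at each essential circle in the $\pomu$ case) and the heteroclinic and bifurcation junctions carry maps with the compatibility and transversality properties of Section~\ref{sec:goodcylchains}. The paper says this explicitly: ``Still, admissible chains need not admit diffusion orbits drifting along them.'' Your sentence ``The chain obtained above satisfies exactly the geometric hypotheses of \cite{GM}'' is therefore the gap; it does not, and the bridge from admissible to good is precisely Theorem~\ref{thm:main2}, proved via the perturbative lemmas of Section~\ref{Sec:perturbation} (Lemmas~\ref{lem:perturblemma}, \ref{lem:pertfomhom}, \ref{lem:perturbpom}) together with the genericity arguments of Lemmas~\ref{lem:goodcylfom}, \ref{lem:goodcylpom} and \ref{lem:genericprop}. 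Without this one cannot conclude that \emph{any} $f\in\jB^\ka(\beps_0)$ yields a diffusing orbit.

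Consequently your density argument also fails as stated: density of the good set inside $\jB^\ka(\beps_0)$ does not ``follow from the density of the genericity conditions of \cite{Mar}'' (those conditions produce $\jB^\ka(\beps_0)$ itself, on which the admissible chain exists for \emph{every} $f$); density comes from Theorem~\ref{thm:main2}, which shows that for any $H=h+f$ with an admissible chain, there is an arbitrarily $C^\ka$-small additional perturbation making the chain good, hence a diffusing orbit for the perturbed Hamiltonian. Openness is then the elementary observation that the property of possessing an orbit which visits each of the finitely many open sets $\T^3\times O_i$ is $C^\ka$-open (continuous dependence of orbit segments on the Hamiltonian over compact time intervals). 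Your appeal to the openness of transversality of stable/unstable manifolds is closer in spirit to the internals of Theorem~\ref{thm:main2} than to the final step of Theorem~\ref{thm:main1}, and in any case does not stand on its own without the perturbation argument supplying the good chain in the first place.
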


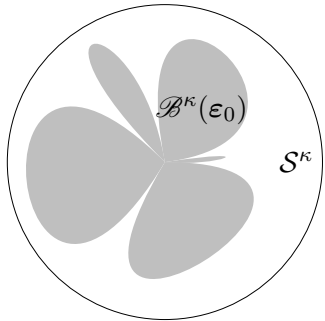
\begin{figure}[h]
\begin{center}
\begin{pspicture}(0cm,1.8cm)
\psset{xunit=.7,yunit=.7,runit=.7}
\pscircle[linewidth=0.1mm](0,0){3}
\psbezier*[linecolor=lightgray](0,0)(1,-.1)(2,.3)(0,0)
\psbezier*[linecolor=lightgray](0,0)(4,.5)(-1,5)(0,0)
\psbezier*[linecolor=lightgray](0,0)(5,0)(-3,-5)(0,0)
\psbezier*[linecolor=lightgray](0,0)(-.5,3)(-3,3)(0,0)
\psbezier*[linecolor=lightgray](0,0)(-4,4)(-3,-5)(0,0)
\rput(2.5,0){$\bS^\ka$}
\rput(.7,1){$\jB^\ka(\beps_0)$}
\end{pspicture}
\vskip20mm
\caption{A generalized ball}\label{Fig:genball}
\end{center}
\end{figure}

\paraga Our approach consists in proving first the existence of a ``geometric skeleton'' for
diffusion and then to use this skeleton to produce the diffusion orbits. Let us informally describe
our method (the precise definitions were introduced in \cite{Mar} and \cite{GM}, they are recalled 
in Section~\ref{Sec:setting}).

The main objects constituting the skeleton are $3$-dimensional invariant cylinders with boundary,
diffeomorphic to $\T^2\times[0,1]$, which are moreover normally hyperbolic and satisfy
several additional properties, and which we call {\em admissible cylinders}. 
We also have to consider admissible {\em singular} cylinders. 
These cylinders and singular cylinders contain particular $2$-dimensional invariant tori, which
we call {\em essential tori}, and admit {\em homoclinic correspondences} coming from
the homoclinic intersections of the essential tori. 


Finally, we define an  {\em admissible chain},
as a finite family $(\jC_k)_{1\leq k\leq k_*}$ of admissible
cylinders or singular cylinders, with heteroclinic connections from $\jC_k$ to $\jC_{k+1}$, 
$1\leq k\leq k_*-1$, which satisfy additional dynamical conditions.

The main result of \cite{Mar} is the following.
\vskip2mm\noindent
{\bf Theorem \cite{Mar}.} {\it 
Consider a $C^\ka$ integrable Tonelli Hamiltonian $h$ on $\A^3$.
Fix $\e>\Min h$ and a finite family of open sets $O_1,\ldots,O_m$
which intersect $h\inv(\e)$. Fix $\de>0$.  
Then for $\ka\geq \ka_0$ large enough, there exists a lower-semicontinuous
function 
$$
\beps_0:\jS^\ka\to\R^+
$$ 
with positive values on an open dense subset of $\jS^\ka$ such that for $f\in\jB^\ka(\beps_0)$
the system
\begin{equation}
H(\th,r)=h(r)+ f(\th,r)
\end{equation}
admits an admissible chain of cylinders and singular cylinders
such that each open set $\T^3\times O_k$ contains the $\de$-neighborhood
in $\A^3$ of some essential torus of the chain.}
\vskip2mm

Still, admissible chains need not admit diffusion orbits drifting along them.
In \cite{GM} is introduced the more refined notion of  {\em good
chains of cylinders}. A {\em $\de$-admissible orbit} for a good chain is an orbit which 
intersects the $\de$-neighborhood (in $\A^3$) of any essential torus  of
the chain. The main result of \cite{GM} is the following.

\vskip2mm\noindent
{\bf Theorem \cite{GM}.} {\it 
Let $H$ be a $C^2$ proper Hamiltonian on $\A^3$ and let $\e$ be a regular value of~$H$.
Then, for any good chain of cylinders contained in $H\inv(\e)$ and for any $\de>0$,
there exists a $\de$-admissible orbit for the chain.
}

\paraga Taking the previous two results for granted, Theorem~\ref{thm:main1} is an easy consequence
of the following perturbative result, whose proof constitutes the main part of the paper.

\begin{thm}\label{thm:main2}
Let $H$ be a $C^\ka$ proper Hamiltonian on $\A^3$ and let $\e$ be a regular value of~$H$.
Fix $\de>0$ and assume that $H$ admits an admissible chain $(\jC_k)_{1\leq k\leq k_*}$.
Then for any $\al>0$ there exists a Hamiltonian $\cH\in C^\ka(\A^3)$ with
\beq
\norm{H-\cH}_\ka<\al
\eeq
such that $(\jC_k)_{1\leq k\leq k_*}$ is a good chain at energy $\e$ for $\cH$,
such that each open set $\T^3\times O_k$ contains the $\de$-neighborhood
in $\A^3$ of some essential torus.
\end{thm}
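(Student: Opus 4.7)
The overall plan is to start from the given admissible chain $(\jC_k)_{1\leq k\leq k_*}$ of $H$ and, by a finite sequence of independent, localized, arbitrarily small $C^\kappa$ perturbations, upgrade each missing transversality/genericity condition into the corresponding ``good chain'' condition of \cite{GM}, while using normal hyperbolicity to guarantee that the cylinders themselves and their essential tori persist nearby. The key structural fact is that an admissible chain already contains all the geometric objects one needs: $3$-dimensional normally hyperbolic cylinders (possibly singular), their stable/unstable laminations, essential $2$-tori inside them, and heteroclinic or homoclinic connections. What may fail in the admissible case and must be corrected is transversality (of homoclinic and heteroclinic intersections along the cylinders) together with a finite list of dynamical non-degeneracies (twist-type conditions, non-resonance of the essential tori, absence of tangencies at the corners of singular cylinders, etc.) that are required for the conclusion of \cite{GM} to apply.

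First I would invoke persistence: by Hirsch--Pugh--Shub adapted to manifolds with boundary, there exists $\eta=\eta(H,\al)>0$ such that every $C^\kappa$ Hamiltonian $\cH$ with $\norm{H-\cH}_\kappa<\eta$ admits normally hyperbolic cylinders $\jC_k(\cH)$, $C^{\kappa-1}$-close to $\jC_k$, together with their local stable/unstable manifolds, continuously depending on $\cH$. By standard KAM, the essential (Diophantine) tori inside each $\jC_k$ also persist, with position moving by $O(\norm{H-\cH}_\kappa^{1/2})$, so by choosing $\eta$ small the $\delta$-neighborhood property (the essential tori sitting inside the prescribed $\T^3\times O_k$) is automatically preserved. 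Next I would localize the remaining work: the finitely many heteroclinic connections $\jC_k\to\jC_{k+1}$ and the homoclinic correspondences of each essential torus each involve a compact piece of orbit along which I can choose a pairwise disjoint family of flow-boxes, disjoint from the cylinders and from the other connections. This decoupling allows an independent perturbation near each connection.

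In each such flow-box I would add a small bump-type perturbation $\chi_j\, g_j$, where $\chi_j$ is a cut-off supported in the flow-box and $g_j$ is a smooth generating function built from a finite-parameter family (e.g.\ translation generators in the normal directions to the energy surface, preserving $\e$ as a regular value). The effect of such a perturbation on the return map at a Poincar\'e section transverse to the connection is a smooth family of diffeomorphisms whose differential with respect to the parameters surjects onto the normal displacement space of the relevant invariant manifold. A finite-dimensional transversality (parametric Sard) argument then yields an open dense set of parameters for which the homoclinic/heteroclinic intersections become transverse in the energy surface and all the twist and non-degeneracy inequalities from the $\glu$, $\fet$ and $\pom$-type definitions in \cite{GM} hold strictly. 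Assembling the $k_*$-many independent local perturbations of $C^\kappa$-norm $<\al/k_*$, one obtains $\cH$ with $\norm{H-\cH}_\kappa<\al$ for which every local good-chain condition is satisfied simultaneously, and by construction $(\jC_k(\cH))=(\jC_k)$ agrees with the original chain (or is $C^{\kappa-1}$-arbitrarily close to it, up to relabelling of the persistent cylinders).

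The main obstacle I foresee is the simultaneous handling of three competing constraints on the perturbation: (i) smallness in the strong $C^\kappa$ norm, (ii) support localized in narrow flow-boxes, and (iii) surjectivity of the parameter-to-invariant-manifold map so that transversality can be forced generically. In particular, one must check that localized bump perturbations of small $C^\kappa$ norm still produce, on the Poincar\'e section, displacements that are large compared with the $C^1$-distance between the un-perturbed stable and unstable manifolds (which may be exponentially small in the cylinder parameters); the natural answer is to \emph{first} fix the cylinders and their manifolds via normal hyperbolicity, and only \emph{then} choose $\al$ and the flow-boxes accordingly. A secondary difficulty is to arrange the perturbation so that $\e$ remains a regular value and the level set $\cH\inv(\e)$ carries the deformed chain; this is handled by projecting the generator $g_j$ onto the subspace tangent to the energy constraint, which costs only a bounded factor in the $C^\kappa$ norm and does not affect the transversality argument.
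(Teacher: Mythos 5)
Your proposal misidentifies what actually needs to be corrected when passing from an admissible chain to a good chain, and this misdiagnosis invalidates the strategy. The admissible-chain conditions (\fomu, \fomd, \pomu, \glu) \emph{already} include the transversality of the homoclinic/heteroclinic intersections: $\jC_*^+\cap\jC_*^-$ is assumed transverse in $\De_\e$, the gluing boundaries have $W^\pm$ intersecting transversely, etc. What is \emph{not} guaranteed, and what the notion of good chain adds, is a list of dynamical conditions on the return map $\ph$ and the homoclinic correspondence $\psi$: that $\ph$ be a special twist map (no rational essential circle, Birkhoff-zone accumulation structure), that no element of $\Ess(\ph)$ be $\psi$-invariant in the \fomu\ case, and --- most delicately --- that in the \pomu\ case \emph{every} essential circle $\Ga\in\Ess(\ph)$ admit a splitting domain (or a simple splitting arc at Birkhoff-zone boundaries). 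This last condition is indexed by an uncountable family of Lipschitz circles, not by a finite list of intersection points, so the ``finite-dimensional parametric Sard'' step at the heart of your plan cannot reach it. The paper instead deals with this via the singularity-theoretic Lemma \ref{lem:genericprop} (controlling tangency sets, folds and cusps of the characteristic projections uniformly over the cylinder) followed by a covering and contradiction argument that exploits the cohomology constraint $\jT^+\cap\jT^-\ne\emptyset$ from Lemma \ref{lem:existint}; none of this is captured by ``force transversality generically.''

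A second, independent gap concerns where the perturbation lives. The theorem's conclusion is that the \emph{same} $(\jC_k)$ is a good chain for $\cH$. The paper achieves this by using the flow-box perturbation Lemma \ref{lem:perturblemma}, which inserts a prescribed Hamiltonian diffeomorphism into a Poincar\'e map inside a box disjoint from the cylinders and the $4$-annuli, so $\cH=H$ near every $\jC_k$ and every $\jA_k$; the cylinders, their invariant tori, and the return maps are then \emph{literally} unchanged, while only the transition $j^\pm$ along the homoclinic excursion is composed with the chosen diffeomorphism. Your plan, by contrast, perturbs globally and appeals to Hirsch--Pugh--Shub and KAM to recover nearby cylinders and nearby Diophantine tori. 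This is both weaker (the chain moves, which you concede) and insufficient: essential tori are Birkhoff--Lipschitz tori of the twist map, not only KAM tori, so KAM persistence gives you no control over most of them, and the split between ``the geometry persists'' and ``the intersections become transverse'' does not decouple in the way your flow-box localization requires. Finally, your sketch omits the bifurcation pairs entirely; the paper must treat these separately via an $\eps$-dependent normal form near the bifurcation locus, which is a nontrivial additional step in Section \ref{ssec:proofmain2}.
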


\paraga We recall the necessary definitions from \cite{Mar} and \cite{GM} in Section~\ref{Sec:setting}.
We state in Section~\ref{Sec:perturbation} a perturbative result for the characteristic foliations of
the stable and unstable manifolds of a normally hyperbolic manifold, which is the main ingredient for the 
proof of Theorem~\ref{thm:main2}.
In Section~\ref{Sec:proof2} we prove Theorem~\ref{thm:main2}, from which we deduce Theorem~\ref{thm:main1}
thanks to the previous two results of \cite{Mar,GM}. We recall some necessary results on normally hyperbolic manifolds
in Appendix~\ref{app:normhyp}.


\setcounter{paraga}{0}
\section{The setting}\label{Sec:setting}
The Hamiltonian vector field associated with a $C^2$ function $H$ will be denoted by $X_H$ 
and its Hamiltonian flow, when defined, by $\Phi_H$.


\setcounter{paraga}{0}
\subsection{Normally hyperbolic annuli and cylinders}\label{sec:normhyp}

We introduce the main objects of our construction, that is, normally hyperbolic $3$-dime\-nsional cylinders 
and singular cylinders with boundary. 
We refer to \cite{C04,Berg10} for direct presentations of the normal hyperbolicity of manifolds with 
boundary.
Here we will recall the definitions of \cite{Mar} (to which we refer for more details), which take advantage of 
the existence of invariant
$4$-dimensional symplectic annuli containing the cylinders in their relative interior. These
annuli will moreover play an essential role in the definition of the intersection conditions in the next section.

\paraga A {\em $4$-annulus} will be a $C^p$ manifold $C^p$ 
diffeomorphic to $\A^2$, with $p\geq 2$.
A {\em singular annulus} will be a ($4$-dimensional) $C^1$ manifold 
$C^1$-diffeomorphic to $\T\times\,]0,1[\,\times \cY$, 
where $\cY$ is (any realization of) the sphere $S^2$ minus three points.

\paraga 
 A  {\em  $C^p$  cylinder} is a $C^p$ manifold $C^p$-diffeomorphic to $\T^2\times [0,1]$, so that a cylinder
is compact and its boundary has two components diffeomorphic to $\T^2$.
A  {\em singular cylinder} 
is a $C^1$ manifold $C^1$-diffeomorphic to $\T\times \bY$, where~$\bY$ is
(any realization of) the sphere $S^2$ minus three open discs with nonintersecting 
closures. Our singular cylinders will moreover be of class $C^p$, $p\geq 2$, in 
large neighborhoods of their boundary
(which admits three components, diffeomorphic to $\T^2$).

\paraga We endow now $\A^3$ with its standard symplectic
form $\Om$, and we assume that $X=X_H$ is the vector field generated by $H\in C^\ka(\A^3)$, $\ka\geq 2$.
Following \cite{Mar}, we say that an invariant $4$-annulus $\jA\subset A^3$ for $X$ is 
{\em normally hyperbolic} when there exist 
\vskip1.5mm
$\bu$ 
an open subset $O$ of $\A^3$ containing $\jA$, 
\vskip1.5mm
$\bu$ an embedding $\Psi:O\to \A^2\times \R^2$ whose image has compact closure, such that $\Psi_*\Om$ continues
to a symplectic form $\ov\Om$ on $\A^2\times \R^2$ which satisfies (Appendix~\ref{app:normhyp} (\ref{eq:assumpsymp})),
\vskip1.5mm
$\bu$ a vector field $\jV$ on
$\A^2\times \R^2$ satisfying the assumptions of the normally hyperbolic persistence theorem, in particular~(\ref{eq:addcond}), 
together with those of the symplectic normally hyperbolic theorem (Appendix~\ref{app:normhyp}) for the form $\ov\Om$,
such that, with the notation of this theorem:
\beq
\Psi(\jA)\subset \Ann(\jV)\quad{\rm and}\quad \Psi_*X(x)=\jV(x),\quad \forall x\in O.
\eeq
Such an annulus $\jA$ is therefore of class $C^p$ and symplectic. 
We define  normally hyperbolic singular $4$-annuli, with in this case $p=1$.
One easily checks that  annuli and singular annuli are uniformly
normally hyperbolic in the usual sense. In particular, they admit well-defined stable, 
unstable, center-stable and
center-unstable manifolds. The stable and unstable manifolds are coisotropic and their 
characteristic foliations coincide with their center-stable
and center-unstable foliations

\paraga With the same assumptions, let $\e$ be a regular value of $H$.  
Here we say for short that an invariant cylinder (with boundary) $\jC\subset H\inv(\e)$ for $X_H$  is 
{\em normally hyperbolic in $H\inv(\e)$} when there exists 
an invariant normally hyperbolic  {\em symplectic} $4$-annulus $\jA$ for $X_H$, 
such that $\jC\subset\jA\cap H\inv(\e)$.
Any such $\jA$  is said to be {\em associated with $\jC$}.

We say for short that a singular cylinder  $\jC_\bu\subset H\inv(\e)$  is invariant for $X_H$
when it is invariant together with its critical circles. We say that $\jC_\bu$
{\em normally hyperbolic in $H\inv(\e)$} when there is 
an invariant normally hyperbolic   {\em symplectic} singular annulus $\jA_\bu$ for $X_H$ such that 
$\jC_\bu\subset\jA_\bu\cap H\inv(\e)$. Any such $\jA$  is said to be {\em associated with $\jC$}.

One immediately sees that normally hyperbolic invariant cylinders or singular cylinders, contained
in $H\inv(\e)$, admit well-defined $4$-dimensional stable and unstable manifolds with boundary, 
also contained in $H\inv(\e)$, together with their center-stable and center-unstable foliations. 
The stable and unstable manifolds of the complement in a singular cylinder of its critical circles
are $C^p$.

\paraga A normally hyperbolic cylinder admits $C^1$ characteristic projections 
$\Pi^\pm:W^\pm(\jC)\to\jC$ (since the invariant manifolds $W^\pm(\jA)$ of are $C^p$ with $p\geq 2$), 
it satisfies the $\la$-lemma (as stated in \cite{GM}) and one easily proves that it admits a $\Phi_H$-invariant 
Radon measure $\mu_\jC$, positive on its open sets. It is therefore {\em tame} in the sense of \cite{GM}.
A normally hyperbolic singular cylinder admits $C^0$ characteristic projections, which are $C^1$
in a large neighborhood of its boundary, it also satisfies the $\la$-lemma 
and admits a $\Phi_H$-invariant Radon measure $\mu_\jC$, positive on its open sets.

\paraga  Let $a<b$ be fixed. Let us introduce the notation:
$$
\bA:=\bA(a,b)=\T\times[a,b],\qquad
\d_\bu\bA=\T\times\{a\},\qquad
\d^\bu\bA=\T\times\{b\}.
$$
A {\em twist section} for a cylinder $\jC\subset H\inv(\e)$
is a global $2$-dimensional transverse section $\Sig\subset \jC_\eps$,
image of an exact-symplectic embedding $j_\Sig: \bA$, such that the associated Poincar\'e return map is a twist
map in the $j_\Sig$-induced coordinates on $ \bA$. 
Denote by $\Ess(\ph)$ the set of essential invariant circles of $\ph$ (that is, whose inverse image by $j_\Sig$
is homotopic to the base). By the Birkhoff theorem, these circles are Lispchizian graphs over the base.
One requires moreover that  the boundaries of $\Sig$ are accumulation points of $\Ess(\ph)$.

Note that  $\d\jC\cap\Sig=\d_\bu\bA\cup \d^\bu\bA$.  We define $\d_\bu\jC$ and $\d^\bu\jC$ as
the components of $\d \jC$ which contain $j_\Sig(\d_\bu\bA)$ and $j_\Sig(\d^\bu\bA)$ respectively.

\paraga A generalized twist section for  a singular  cylinder is a singular $2$ annulus which admits
a continuation to a $2$-annulus, on which the Poincar\'e return map continues to a twist map 
(see \cite{Mar,GM}).


\setcounter{paraga}{0}
\subsection{\bf Intersection conditions, gluing condition, and admissible chains}
Let $H$ be a proper $C^2$ Hamiltonian function on $\A^3$ and fix a regular value~$\e$. 

\paraga{\bf Oriented cylinders.} We say that a cylinder $\jC$ is {\em oriented} when an order
is prescribed on the two components of its boundary. We denote the first one by $\d_\bu\jC$
and the second one by $\d^\bu\jC$.


\paraga {\bf The homoclinic condition \fomu.} 
A compact invariant cylinder $\jC\subset H\inv(\e)$ with twist section $\Sig$ and associated
invariant symplectic $4$-annulus $\jA$ satisfies condition \fomu\ when there exists a 
$5$-dimensional submanifold $\De\subset \A^3$, transverse to $X_H$ such that:
\begin{itemize}
\item  there exist $4$-dimensional submanifolds $\jA^\pm\subset W^\pm(\jA)\cap\De$ such that the
restrictions to $\jA^\pm$ of the characteristic projections $\Pi^\pm:W^\pm(\jA)\to\jA$ are
diffeomorphisms, whose inverses we denote by $j^\pm:\jA\to\jA^\pm$;
\item there exists a continuation $\jC_*$ of $\jC$ such that 
 $\jC^\pm_*=j^\pm(\jC_*)$ 
have a nonempty intersection, transverse in the $4$-dimensional manifold 
$
\De_\e:=\De\cap H\inv(\e)
$;
so that $\jI_*:=\jC_*^+\cap\jC_*^-$ is a $2$-dimensional submanifold of $\De_\e$;
\item the projections $\Pi^\pm(\jI_*)\subset\jC_*$ are $2$-dimensional transverse sections of the vector field $X_H$ 
restricted to $\jC_*$, and the associated Poincar\'e maps $P^\pm: \Pi^\pm(\jI_*)\to\Sig_k$ are diffeomorphisms.
\end{itemize}
We then say that 
\beq
\psi=P^+\circ\Pi^+\circ j^-\circ (P^-)\inv_{\vert\Sig}:\Sig\to \Sig_*
\eeq
(where $\Sig_*$ is a continuation of $\Sig$), is the {\em homoclinic map} attached to $\jC$. 
Note that $\psi$ is a Hamiltonian diffeomorphism on its image.


\paraga {\bf The heteroclinic condition \fomd.} 
A pair  $(\jC_0,\jC_1)$ of compact invariant oriented cylinders with twist sections 
$\Sig_0$, $\Sig_1$ and associated invariant symplectic $4$-annuli $(\jA_0,\jA_1)$
satisfies  condition \fomd\ when there exists a $5$-dimensional submanifold 
$\De\subset \A^3$, transverse to $X_H$ such that:
\begin{itemize}
\item  there exist $4$-dimensional submanifolds 
$\til \jA_0^-\subset W^-(\jA_0)\cap\De$ 
and
$\til \jA_1^+\subset W^+(\jA_1)\cap\De$ 
such that 
$\Pi_0^-{\vert \til\jA_0^-}$
and
$\Pi_1^+{\vert \til\jA_1^+}$
are diffeomorphisms on their images $\til \jA_0$, $\til \jA_1$, which we require to be neighbohoods of the 
boundaries $\d^\bu\jC_0$ and $\d_\bu\jC_1$ in $\jA_0$ and $\jA_1$ respectively,
we denote their inverses by $j_0^-$ and $j_1^+$;
\item there exist neighborhoods $\til\jC_0$ and $\til\jC_1$ of $\d^\bu\jC^0$ and $\d_\bu\jC^1$ in continuations
of the initial cylinders, such that
$\til \jC_0^-=j_0^-(\til\jC_0)$ and $\til\jC_1^+=j_1^+(\til\jC_0)$  intersect transversely 
in the $4$-dimensional manifold 
$
\De_\e:=\De\cap H\inv(\e),
$, let $\jI_*$ be this intersection;
\item the projections $\Pi_0^-(\jI_*)\subset\jC$ and $\Pi_1^+(\jI_*)\subset\jC$ are $2$-dimensional 
transverse sections of the vector field $X_H$ 
restricted to $\til \jC_0$ and $\til\jC_1$, and the Poincar\'e maps $P_0: \Pi_0^-(\jI_*)\to\Sig_0$ and $P_1: \Pi_1^-(\jI_*)\to\Sig_1$ 
are diffeomorphisms (where $\Sig_I$ stands for Poincar\'e sections in the neighborhoods $\til\jC_i$).
\end{itemize}

We then say that 
\beq
\psi=P_1\circ\Pi^+\circ j^-\circ (P_0)\inv:\Sig_0\to\Sig_1
\eeq
is the {\em heteroclinic map} attached to $\jC$ (which is not uniquely defined).


\paraga {\bf The homoclinic condition \pomu.}  
Consider an invariant cylinder $\jC\subset H\inv(\e)$ with twist section $\Sig$ and attached Poincar\'e
return map $\ph$, so that $\Sig=j_\Sig(\T\times[a,b])$, where $j_\Sig$ is exact-symplectic.
Define $\Tess(\jC)$ as the set of all invariant tori generated by the previous
circles under the action on the Hamiltonian flow (so each element of $\Tess(\jC)$ is a Lispchitzian Lagrangian
torus contained in $\jC$). The elements of $\Tess(\jC)$ are said to be {\em essential tori}.
 
\vskip2mm

We say that an invariant cylinder $\jC$ with  associated invariant symplectic $4$-annulus $\jA$ satisfies the 
{\em partial section property~\pom} when there exists a $5$-dimensional submanifold $\De\subset \A^3$, 
transverse to $X_H$ such that:
\begin{itemize}
\item  there exist $4$-dimensional submanifolds $\jA^\pm\subset W^\pm(\jA)\cap\De$ such that the
restrictions to $\jA^\pm$ of the characteristic projections $\Pi^\pm:W^\pm(\jA)\to\jA$ are
diffeomorphisms, whose inverses we denote by $j^\pm:\jA\to\jA^\pm$;
\item there exist conformal exact-symplectic diffeomorphisms 
\beq
\Psi^{\rm ann}:\jO^{\rm ann}\to \jA,\qquad \Psi^{\rm sec}:\jO^{\rm sec}\to\De_\e:=\De\cap H\inv(\e)
\eeq
where $\jO^{\rm ann}$ and $\jO^{\rm sec}$ are neighborhoods of the zero section in $T^*\T^2$ endowed with
the conformal Liouville form $a\la$ for a suitable $a>0$;
\item each  torus $\jT\in\Tess(\jC)$ is contained in some $\jC$ and the image $\Psi^{\rm ann}(\jT)$ is a Lipschitz
graph over the base $\T^2$;
\item for each such torus $\jT$, setting $\jT^\pm:=j^\pm(\jT)\subset \De_\e$, the images
$\Psi^{\rm sec}(\jT^\pm)$ are  Lipschitz graphs over the base $\T^2$.
\end{itemize}


\paraga {\bf Bifurcation condition.} See \cite{Mar} for the definitions and assumptions.
The condition we state involves the $s$-averaged system along a simple resonance circle,
it will be translated in the following as an intrinsic condition. With the notation of \cite{Mar}:
for any $r^0\in B$,  the derivative $\tfrac{d}{dr}\big(m^*(r)-m^{**}(r)\big)$ 
does not vanish. This  immediately yields transverse heteroclinic intersection properties for the intersections
of the corresponding cylinders.


\paraga {\bf The gluing condition \glu.}  A pair  $(\jC_0,\jC_1)$ of compact invariant oriented cylinders 
satisfies  condition \glu\ when they are contained in a invariant cylinder and satisfy 
\begin{itemize}
\item  $\d^\bu\jC_0=\d_\bu\jC_1$ is a dynamically minimal invariant torus that we denote by $\jT$,
\item $W^-(\jT)$ and $W^+(\jT)$ intersect transversely in $H\inv(\e)$.
\end{itemize}


\paraga {\bf \!\! Admissible chains.}\!\!
A finite family of compact invariant oriented cylinders $(\jC_k)_{1\leq k\leq k_*}$ is an {\em admissible chain} 
when each cylinder satisfies either $\fomu$ or $\pomu$ and, for  $k$ in $\{1,\ldots,k_*-1\}$,
the pair $(\jC_k,\jC_{k+1})$ satisfies either $\fomd$ or $\glu$, or corresponds to a bifurcation point.


\setcounter{paraga}{0}
\subsection{Good cylinders and good chains}\label{sec:goodcylchains}
This section is dedicated to the additional conditions introduced in \cite{GM} which produce orbits drifting along a chain.

\paraga {\bf Special twist maps.} 
We say that a twist map $\ph$ of $\bA$ is {\em special} when it does not admit any essential invariant circle with rational rotation
number and when moreover {every} element of $\Ess(\ph)\setm\d^\bu\bA$ is either the upper boundary of a
Birkhoff zone, or accumulated from below (in the Hausdorff topology) by a sequence of elements of $\Ess(\ph)$.

\paraga {\bf The homoclinic correspondence.}  We first define the {\em transverse homoclinic intersection}
of a normally hyperbolic cylinder $\jC\subset H\inv(\e)$ as the set 
\beq
\Homt(\jC)\subset W^+(\jC)\cap W^-(\jC)
\eeq
formed by the points $\xi$ such that
\beq
W^-\big(\Pi^-(\xi)\big)\trans_\xi W^+(\jC)\quad\textrm{and}\quad W^+\big(\Pi^+(\xi)\big)\trans_\xi W^-(\jC),
\eeq
where $\trans_\xi$ stands for ``intersects transversely at $\xi$ relatively to $H\inv(\e)$.''

\vskip2mm$\bu$
Assume that  $\jC$ admits a twist section $\Sig=j_\Sig(\bA)$ and identify
$\bA$ with $\Sig$.
A {\em homoclinic correspondence} associated with these data is a family
of $C^1$ local diffeomorphisms of $\Sig$:
\beq
\psi=(\psi_i)_{i\in I},  \qquad \psi_i:\Dom \psi_i\subset\Sig\to \Im\psi_i \subset\Sig,
\eeq
where $\Dom \psi_i$ and $\Im\psi_i$ are open subsets of $\Sig$, for which there exist
is a family of $C^1$ local diffeomorphisms of $\jC$:
\beq
S=(S_i)_{i\in I},  \qquad S_i:\Dom S_i\subset\jC\to \Im S_i\subset\jC
\eeq
where $\Dom S_i$ and $\Im S_i$ are open subsets of $\jC$, such that for all $i\in I$:
\begin{itemize}
\item  there exists a $C^1$ function $\tau_i:\Dom\psi_i\to \R$ such that
\beq
\forall x\in\Dom\psi_i,\qquad \Phi_H^{\tau_i(x)}(x)\in \Dom S_i \quad \textit{and}\quad
\psi_i(x)=S_i\Big(\Phi_H^{\tau_i(x)}(x)\Big);
\eeq
\item there is an open subset $\Domt S_i\subset \Dom S_i$, with full measure in $\Dom S_i$,
such that 
\beq\label{eq:redhom3}
\forall y\in \Domt S_i,
\quad
W^-(y)\cap 
W^+\big(S_i(y)\big)\cap 
\Homt(\jC)\neq\emptyset.
\eeq
\end{itemize}
We say that a family $S$ satisfying the previous properties is {\em associated} with $\psi$.

\vskip2mm

Homoclinic correspondences are not uniquely defined and the domains $\Dom \psi_i$ (resp. $\Dom S_i$)
are not necessarily pairwise disjoint. The index set $I$ is non countable in general. In the following we 
indifferently consider our homoclinic correspondences as defined on $\Sig$ or on $\bA$.
The following additional definition is necessary to produce $\de$-admissible orbits.

\vskip2mm$\bu$
Let $\jC$ be a good cylinder with twist section $\Sig=j_\Sig(\bA)$
and a homoclinic correspondence $\psi:\bA\righttoleftarrow$. Fix $\de>0$.
We say that $\psi$ is {\em $\de$-bounded} when for each essential circle $\Ga\in\Ess(\ph)$
{
\beq
\dist\Big(\cl(\Ga),\cl\big(\Ga^-\cap\psi\inv(\Ga^+)\big)\Big)<\de
\eeq
where $\dist$ stands for the Hausdorff distance.}

\vskip2mm 
$\bu$ The previous definitions, in their full generality, will apply to cylinders which satisfy Condition \pomu,
in the sense that one immediately shows that any such cylinder admits a homoclinic correspondence.
When a cylinder $\jC$ satisfies \fomu, it turns out that the situation is much simpler: there exists a single 
$C^1$ diffeomorphism $\psi:\bA\righttoleftarrow$ which satisfy the previous compatibility condition with a
single diffeomorphism $S:\Dom S\to\Im S$. In the case of a singular cylinder $\jC_\bu$ with generalized section 
$\Sig_\bu\sim\bA_\bu$, there also exist
a single  $C^1$ diffeomorphism $\psi_\bu:\bA_\bu\righttoleftarrow$ which satisfy the previous compatibility condition with a
single diffeomorphism $S:\Dom S\to\Im S$. The diffeomorphism $\psi_\bu$ is continuable to a diffeomorphism of $\psi$
the continuation $\bA$ of the section.

\paraga {\bf Splitting arcs.} The notions we introduce now are useful only in the case of cylinders
satisfying \pomu.
We consider a such a normally hyperbolic cylinder $\jC$ equipped with a twist section $\Sig=j_\Sig(\bA)$
and a homoclinic correspondence  $\psi=(\psi_i)_{i\in I}$ on $\bA$. An {\em arc} of $\bA$ is a continuous map
$\ze=[0,1]\to\bA$. We write $\ha \ze=\ze([0,1])\subset\bA$ for the image of the arc.
Given two distinct points $\th,\th'$ of $\T$, we write $[\th,\th']$ for the (unique) segment bounded
by $\th$ and $\th'$ according to the natural orientation of $\T$.
When two points $\al=(\th,r),\al'=(\th',r')$ belong to a circle $\Ga$ which is a graph over $\T$,
we write  $[\al,\al']_\Ga$ for the oriented segment of $\Ga$ located over $[\th,\th']$, equipped with
the natural orientation of $\Ga$. We write $-[\al,\al']_\Ga$ for the segment equipped with
the opposite orientation.

\vskip2mm\noindent
Consider $\Ga\in\Ess(\ph)$ and let $\al\in\Ga$.
\begin{itemize}
\item A {\em splitting arc} based at $\al$ for the pair $(\ph,\psi)$ is an arc $\ze$ of $\bA$ for which
$$
\ze(0)=\al,\quad \ze(]0,1])\subset \Ga^-;\quad \exists i\in I,\ \ze(]0,1])\subset \Dom\psi_i,\quad \psi_i(\ze(]0,1]))\subset \Ga.
$$
\item A {\em splitting domain} based at $\al$ for the pair $(\ph,\psi)$ is a the interior of a $2$-dimensional submanifold
with boundary of $\bA$ which is contained in $\Ga^-$ and whose boundary contains a splitting arc based at $\al$;
\item A {\em simple splitting arc} based at $\al=(\th,r)$ for the pair $(\ph,\psi)$
is a splitting arc $\ze$ based at $\al$ such that
$\ha\ze$ projects over an interval $[\th,\th+\sig]$ or an interval $[\th-\sig,\th]$, with $0<\sig<\pdemi$.
\end{itemize}

\paraga {\bf Good cylinders.} We will distinguish between three cases.
\vskip1mm 1) We say that a normally hyperbolic cylinder $\jC$ which satisfies \fomu\ 
is a good cylinder when it admits a twist section $\Sig=j_\Sig(\bA)$ with return map $\ph:\bA\righttoleftarrow$
and a homoclinic map $\psi:\bA\righttoleftarrow$ such that
no element of $\Ess(\ph)$ is invariant under $\psi$.

\vskip1mm 2) We say that a normally hyperbolic singular cylinder $\jC_\bu$ which satisfies \fomu\ 
is a good cylinder when it admits a generalized twist section $\Sig_\bu=j_\Sig(\bA\bu)$ with return map 
$\ph:\bA\bu\righttoleftarrow$ (continuable as a twist map of an annulus)
and a homoclinic map $\psi:\bA\righttoleftarrow$ such that
no element of $\Ess(\ph)$ is invariant under $\psi$.

\vskip1mm 3)
We say that a normally hyperbolic cylinder $\jC$ which satisfies \pomu\ 
is a good cylinder when it admits a twist section $\Sig=j_\Sig(\bA)$ with return map $\ph:\bA\righttoleftarrow$
and a homoclinic correspondence $\psi:\bA\righttoleftarrow$ such that
\begin{itemize}
\item for any element $\Ga$ of $\Ess(\ph)$ {which is not the upper boundary of a Birkhoff zone},
 there exists a splitting domain based on $\Ga$;
\item if $\Ga\in\Ess(\ph)$ is the upper boundary of a Birkhoff zone, then there exists  a simple splitting arc
based on $\Ga$.
\end{itemize}

\paraga {\bf Heteroclinic maps.}  
Let $\jC_1$ and $\jC_2$ be disjoint good cylinders at energy $\e$ for $H$, with characteristic projections
$\Pi_i^\pm:W^\pm(\jC_i)\to\jC_i$ and twist sections $\Sig_i=j_{\Sig_i}(\bA_i)$, with $\bA_i=\T\times[a_i,b_i]$.
We define the {\em transverse heteroclinic intersection of $\jC_1$ and $\jC_2$} as the set
\beq
\Hett(\jC_1,\jC_2)\subset W^-(\jC_1)\cap W^+(\jC_2)
\eeq
formed by the points $\xi$ such that
\beq
W^-\big(\Pi_1^-(\xi)\big)\trans_\xi W^+(\jC_2)\quad\textrm{and}\quad W^+\big(\Pi_2^+(\xi)\big)\trans_\xi W^-(\jC_1).
\eeq
A {\em heteroclinic map} from $\jC_1$ to $\jC_2$ is a $C^1$ diffeomorphism 
\beq
\psi_1^2:\Dom \psi_1^2\subset \Sig_1 \to \Im\psi_1^2\subset \Sig_2
\eeq
where $\Dom \psi_1^2$ is
an open neighborhood  of $\d^\bu\Sig_1$ in $\Sig_1$ and $\Im\psi_1^2$ is an open neighborhood of $\d_\bu \Sig_2$
in $\bA_2$, for which there exists a $C^1$ diffeomorphism 
\beq
S_1^2:\Dom S_1^2\subset \jC_1 \to \Im S_1^2\subset \jC_2
\eeq
where $\Dom S_1^2$ and $\Im S_1^2$ are open subsets, which satisfies the following conditions:
\begin{itemize}
\item  there exists a $C^1$ function $\tau:\Dom\psi_1^2\to \R$ such that
\beq
\forall x\in\Dom\psi_1^2,\qquad \Phi_H^{\tau(x)}(x)\in \Dom S_1^2 \quad \textit{and}\quad
\psi_1^2(x)=S_1^2\Big(\Phi_H^{\tau(x)}(x)\Big);
\eeq
\item there is an open subset $\Domt S_1^2\subset \Dom S_1^2$, with full measure in $\Dom S_1^2$,
such that 
\beq\label{eq:compahetero}
\forall y\in \Domt S_1^2,\qquad  W^-(y)\cap W^+\big(S_1^2(y)\big)\cap\Hett(\jC_1,\jC_2)\neq\emptyset.
\eeq
\end{itemize}

\paraga {\bf Bifurcation maps.}  
Let $\jC_1$ and $\jC_2$ be disjoint good cylinders at energy $\e$ for $H$, with characteristic projections
$\Pi_i^\pm:W^\pm(\jC_i)\to\jC_i$ and twist sections $\Sig_i=j_{\Sig_i}(\bA_i)$, with $\bA_i=\T\times[a_i,b_i]$.
A bifurcation map from $\jC_1$ to $\jC_2$ is a $C^1$ diffeomorphism 
\beq
\psi_1^2:\Ga_1\subset \Sig_1 \to \Ga_2\subset \Sig_2
\eeq
where $\Ga_i$ are dynamically minimal circles for the return maps $\ph_i$, such that, denoting
by $\jT_i$ the essential tori they generate, there exists a $C^1$ diffeomorphism 
\beq
S_1^2:\jT_1 \to \jT_2
\eeq
which satisfies the following conditions:
 there exists a $C^1$ function $\tau:\jT_1\to \R$ such that
\beq
\forall x\in\jT_1,\qquad \Phi_H^{\tau(x)}(x)\in \jT_1 \quad \textit{and}\quad
\psi_1^2(x)=S_1^2\Big(\Phi_H^{\tau(x)}(x)\Big);
\eeq
and
\beq
\forall y\in \jT_1,\qquad  W^-(y)\cap W^+\big(S_1^2(y)\big)\cap\Hett(\jC_1,\jC_2)\neq\emptyset.
\eeq

\paraga {\bf Good chains.} 
 A {\em good chain of cylinders} at energy $\e$ is an admissible chain
$(\jC_k)_{1\leq k\leq k_*}$ of {\em good} cylinders or singular cylinders at energy $\e$, with twist sections $\Sig_k$, 
such that for $1\leq k\leq k_*-1$:
\begin{itemize}
\item either $\jC_k$ and $\jC_{k+1}$ are consecutive cylinders contained in the same cylinder, that is $\d_\bu\jC_k=\d^\bu\jC_{k+1}$,
which satisfy the gluing condition \glu;
\item or there exists  a bifurcation map  $\psi_k^{k+1}$ from $\jC_k$ to $\jC_{k+1}$;
\item or a heteroclinic map $\psi_k^{k+1}$ from $\jC_k$ to $\jC_{k+1}$
and a circle $\Ga_k\in\Ess (\ph_k)$ contained in $\Dom \psi_k^{k+1}$ whose image $\psi_k^{k+1}(\Ga_k)$
is a dynamically minimal essential invariant circle for $\ph_{k+1}$.
\end{itemize}


\section{Perturbation of characteristic foliations}\label{Sec:perturbation}
We now introduce the main ingredient of our perturbative construction.


\subsection{A perturbative lemma for Poincar\'e maps}
We refer to \cite{MS} for the necessary definitions and results in symplectic geometry.
We begin with a global form of the Hamiltonian flow-box theorem. 

\begin{lemma}\label{lem:hamflowbox}
Let $(M^{2m},\Om)$ be a symplectic manifold with Poisson bracket $\{\, ,\, \}$, and fix a Hamiltonian 
$H\in C^\infty(M)$, with complete
vector field. Let $\La$ be a codimension 1 submanifold of $M$, transverse to $X_H$, such that 
there exists un open interval $I\subset\R$ containing $0$ for which the restriction of $\Phi_H$ to $I\times \La$ is 
an embedding. Set 
\beq
\jD:=\Phi_H(I\times\La)
\eeq
 and let $F:\jD\to\R$ be the $C^\ka$ (transition time) function defined by
\beq
\Phi_H\big(-F(x),x\big)\in\La,\qquad \forall x\in \jD.
\eeq
Then $\{H,F\}=1$ and $\La=F\inv(0)$, so $X_F$ is tangent to $\La$. 
Assume moreover that there exist an open interval $J$ and $\ov\e\in J$ such that, setting
$\La_{\ov\e}=H\inv(\ov \e)\cap \La$,
the flow of $X_F$ is defined on $J\times \La_{\ov\e}$ and satisfies
\beq
\La=\Phi_F\big(J\times \La_{\ov\e}\big).
\eeq
Then the form $\Om_{\ov \e}$ induced by $\Om$ on $\La_{\ov\e}$ is symplectic, and the map
\beq
\begin{array}{lccll}
\bchi:  &(I\times J)\times \La_{\ov\e} &\longrightarrow& \jD&\\[5pt]
           &\big((t,\e), x\big)&\longmapsto &\Phi_H\big(t,\Phi_F(\e,x)\big)&
\end{array}           
\eeq
is a $C^\infty$ symplectic diffeomorphism on its image, where $(I\times J)\times \La_{\ov\e}$ is equipped
with the form
\beq
d\e\wedge dt\,\oplus\, \Om_{\ov \e}.
\eeq
Moreover
\beq
H\circ\bchi\big((t,\e), x\big)=\e,\qquad 
\bchi^*(X_H)=\frac{\d}{\d t}.
\eeq
\end{lemma}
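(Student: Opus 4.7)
The plan is to treat this as a symplectic flow-box theorem tailored to a Hamiltonian with a prescribed transverse section; the argument splits into three short steps.

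\textbf{Step 1 (bracket identity and commuting flows).} The defining relation $\Phi_H(-F(x),x)\in\Lambda$ determines $F$ uniquely on $\jD$ since $\Phi_H$ embeds $I\times\Lambda$, and it yields the cocycle $F(\Phi_H^s x)=F(x)+s$ for small $s$. Differentiating at $s=0$ gives $X_H\cdot F=1$, i.e.\ $\{H,F\}=1$. Moreover $F^{-1}(0)\cap\jD=\Lambda$ and $X_F\cdot F=0$, so $X_F$ is tangent to $\Lambda$. Because $\{H,F\}$ is constant, one has $[X_H,X_F]=\pm X_{\{H,F\}}=0$, so the two flows commute wherever both are defined. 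Analogously $X_F\cdot H=\mp 1$ is constant, so $H\circ\Phi_F^\epsilon$ depends affinely on $\epsilon$; this controls how the $F$-flow sweeps $\Lambda$ across energy levels and justifies, via the hypothesis $\Lambda=\Phi_F(J\times\Lambda_{\ov\epsilon})$, the use of $\Lambda_{\ov\epsilon}$ as a base.

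\textbf{Step 2 (symplecticity of $\Omega_{\ov\epsilon}$).} At any $x\in\Lambda_{\ov\epsilon}$ the plane $V=\mathrm{span}\{X_H(x),X_F(x)\}$ is symplectic because $\Omega(X_H,X_F)=\{F,H\}=\pm 1\neq 0$. Its symplectic orthogonal is $\ker dH_x\cap\ker dF_x$, which is exactly $T_x\Lambda_{\ov\epsilon}$. Since the symplectic orthogonal of a symplectic subspace of a symplectic vector space is again symplectic, $\Omega_{\ov\epsilon}$ is nondegenerate; being a restriction of a closed form it is symplectic.

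\textbf{Step 3 (the flow-box $\bchi$).} Bijectivity of $\bchi$ onto $\jD$ follows from the embedding hypothesis on $\Phi_H$ together with the surjectivity $\Phi_F(J\times\Lambda_{\ov\epsilon})=\Lambda$. Its differential sends $\partial/\partial t\mapsto X_H$ and $\partial/\partial\epsilon\mapsto X_F$ (using commutativity to exchange freely the order of the flows), and it pushes $T_x\Lambda_{\ov\epsilon}$ forward by the symplectomorphism $\Phi_H^t\circ\Phi_F^\epsilon$; these three images span the full tangent space by the transversality observations of Step 2, so $\bchi$ is a diffeomorphism. To identify $\bchi^*\Omega$, test on basis pairs: on the $(t,\epsilon)$-plane, $\Omega(X_H,X_F)=\pm 1$ matches $d\epsilon\wedge dt$; the mixed blocks vanish since $\Omega(X_H,v)=-dH(v)=0$ and $\Omega(X_F,v)=-dF(v)=0$ for $v\in T\Lambda_{\ov\epsilon}$; the $\Lambda_{\ov\epsilon}$-block equals $\Omega_{\ov\epsilon}$ because $\Phi_H^t\circ\Phi_F^\epsilon$ is symplectic. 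Hence $\bchi^*\Omega=d\epsilon\wedge dt\oplus\Omega_{\ov\epsilon}$. The identities $H\circ\bchi=\epsilon$ (with the energy-normalization implicit in the parametrization of $J$) and $\bchi^*X_H=\partial/\partial t$ follow at once from Step 1 and the definition of $\bchi$.

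\textbf{Main obstacle.} No step is conceptually deep; the only point that deserves care is the verification that $\bchi$ is globally injective on $I\times J\times\Lambda_{\ov\epsilon}$, which reduces to combining the embedding assumption on $\Phi_H$ with the fact that $\Phi_F$ parametrizes $\Lambda$ without folding. Sign choices for the Poisson bracket enter only cosmetically and do not affect the argument.
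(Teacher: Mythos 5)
Your proof is sound, and it is worth noting that the paper states Lemma~\ref{lem:hamflowbox} without supplying a proof, treating it as a standard variant of the Hamiltonian flow-box theorem; so there is no argument in the paper to compare against. Your three-step decomposition (cocycle identity for the transition time $F$ yielding $\{H,F\}=1$ and commuting flows; symplectic-orthogonal argument for nondegeneracy of $\Omega_{\ov\eps}$; block computation of $\bchi^*\Omega$) is exactly the natural route, and each verification is correct.

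One small point you flag but do not fully unwind deserves slightly more care if this were to be spelled out in full. As literally written, $\Phi_F(\eps,\cdot)$ for $\eps\in J$ would flow for time $\eps$, not time $\eps-\ov\eps$; with $\{H,F\}=1$ one gets $H\circ\Phi_F^{s}=H\mp s$, so $H\circ\bchi\big((t,\eps),x\big)=\ov\eps\mp\eps$ rather than $\eps$. The lemma is implicitly using a reparametrized $X_F$-flow normalized so that $\Phi_F(\ov\eps,\cdot)=\mathrm{Id}$ on $\La_{\ov\eps}$ and $H\circ\Phi_F(\eps,x)=\eps$. Your parenthetical ``energy-normalization implicit in the parametrization of $J$'' acknowledges this, but if you want the displayed identity $H\circ\bchi=\eps$ to come out on the nose you should state this normalization explicitly; it also fixes the sign in the $(t,\eps)$-block so that $\bchi^*\Omega$ lands on $d\eps\wedge dt$ rather than $dt\wedge d\eps$. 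With that made explicit, everything else in your argument (injectivity of $\bchi$ via the embedding hypothesis for $\Phi_H$ together with the monotone sweep of energy levels by $\Phi_F$, surjectivity onto $\jD$, and the vanishing of the mixed $\Omega$-blocks because $dH$ and $dF$ are invariant under $\Phi_H^t\circ\Phi_F^\eps$ up to additive constants) is complete.
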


In the following we say that a submanifold $\La$ satisfying the assumptions of the previous lemma 
is a {\em box-section} for $X_H$, with associated data $(I,J,\ov \e)$. Given {\em any} transverse section
$\ha\La$ of $X_H$ and a compact subset $K$ of $\ha\La\cap H\inv(\ov\e)$, one easily proves that
there exists a box-section $\La\subset \ha\La$ which is a neihborhood of $K$ in $\ha\La$.

\begin{lemma}\label{lem:perturblemma}
Let $M^{2m}$ be a symplectic manifold and fix a Hamiltonian $H\in C^\infty(M)$ with complete 
vector field $X_H$ and flow $\Phi_H$. 
Assume that $\La$ 
is a {\em box-section} for $X_H$, with associated data $(I,J,\ov \e)$ such that $[-1,0]\subset I$, set
\beq
\dem=\Phi_H\inv(\La),\qquad  \La_{\ov\e}=\La\cap H\inv(\ov\e),\qquad  \dem_{\ov\e}=\dem\cap H\inv(\ov\e),
\eeq
and let $P_H$ be the $\Phi_H$-induced Poincar\'e map between $\dem_{\ov\e}$ and $\La_{\ov\e}$.

Let $K$ be a compact of $\La_{\ov \e}$ contained in the relative interior of $\La_{\ov \e}$.
Then for any $C^\infty$ Hamiltonian diffeomorphism $\phi:\La_{\ov \e}\righttoleftarrow$ with support  in $K$,
there exists a Hamiltonian $\cH\in C^\infty(M)$ such that:
\vskip1mm
$\bu$ $\La$ is a box-section of $X_{\cH}$ with associated data $(I,J,\ov \e)$, and $\Phi_\cH\inv(\La)=\dem$,
\vskip1mm
$\bu$ $\cH$  coincides with $H$ outside $\Phi_H(]-1,0[\times \La)$,
\vskip1mm
$\bu$  the $\Phi_\cH$-induced Poincar\'e map between $\dem_{\ov\e}$ and $\La_{\ov\e}$ satisfies
\beq
P_\cH=\phi\circ P_H.
\eeq
\vskip0mm
$\bu$ $\cH$ tends to $H$ in the $C^\infty$ topology when  
$\phi\to \Id$ in the $C^\infty$ topology.
\end{lemma}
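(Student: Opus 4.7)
My strategy is to pull the whole construction back to the Hamiltonian flow-box chart $\bchi$ provided by Lemma~\ref{lem:hamflowbox}, and then add to $H$ a compactly supported function concentrated in the strip $]-1,0[$ that implements $\phi$ as the extra twist in the Poincar\'e map. In the coordinates $(t,\e,x)\in(I\times J)\times\La_{\ov\e}$, the symplectic form reads $d\e\wedge dt\oplus\Om_{\ov\e}$, we have $H=\e$, $X_H=\partial_t$, $\La=\{t=0\}$, $\dem=\{t=-1\}$, and, after identifying $\dem_{\ov\e}$ and $\La_{\ov\e}$ via the projection to $x$, the map $P_H$ is the identity.

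\textbf{Construction.} Since $\phi\in\Ham(\La_{\ov\e})$ is supported in $K$, pick a smooth time-dependent Hamiltonian $K_\tau(x)$ on $\La_{\ov\e}$ ($\tau\in[0,1]$), compactly supported in $[0,1]\times K$ and vanishing near $\tau=0$ and $\tau=1$, whose time-$1$ flow is $\phi$. For $\phi$ in a $C^\infty$-neighbourhood of $\Id$ one can make this choice canonical and so that $K_\tau\to0$ in $C^\infty$ as $\phi\to\Id$, for instance via the generating function of $\phi$ relative to the diagonal of $\La_{\ov\e}\times\La_{\ov\e}$, producing an autonomous generator~$G$ that one multiplies by a fixed cutoff in $\tau$ to meet the boundary conditions. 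Fix a non-decreasing $\sigma\in C^\infty([-1,0],[0,1])$ with $\sigma\equiv0$ near $-1$ and $\sigma\equiv1$ near $0$, and set
\[
\tilde K(t,x):=\sigma'(t)\,K_{\sigma(t)}(x)\qquad\bigl(t\in\,]-1,0[\bigr),
\]
extended by zero outside. Finally define $\cH:=H$ outside $\jD$ and $\cH:=\e+\tilde K(t,x)$ in box coordinates. Because $\tilde K$ is compactly supported in $]-1,0[\,\times K$, the function $\cH$ is smooth on $M$, agrees with $H$ outside $\Phi_H(]-1,0[\times\La)$, and tends to $H$ in $C^\infty$ as $\phi\to\Id$.

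\textbf{Verifications.} Since $\cH$ differs from $\e$ only by a function of $(t,x)$, the $\partial_\e$-component of $X_\cH$ in the box is identically $1$, so $\dot t\equiv1$ along trajectories; hence $\La$ and $\dem$ remain transverse to $X_\cH$ and $\La$ is a box-section for $X_\cH$ with the same data $(I,J,\ov\e)$. The $x$-component of $X_\cH$ is the Hamiltonian vector field of $x\mapsto\tilde K(t,x)$ on $(\La_{\ov\e},\Om_{\ov\e})$, so the $x$-evolution along a trajectory reads $dx/dt=\sigma'(t)X^{\Om_{\ov\e}}_{K_{\sigma(t)}}(x)$; the substitution $\tau=\sigma(t)$ converts this into $dx/d\tau=X^{\Om_{\ov\e}}_{K_\tau}(x)$, whose flow from $\tau=0$ to $\tau=1$ is~$\phi$. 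Conservation of $\cH=\e+\tilde K$ along the flow together with $\tilde K\equiv0$ on $\{t=0\}\cup\{t=-1\}$ forces the $\e$-coordinate to return to its initial value on both $\La$ and $\dem$, so $\Phi_\cH\inv(\La)=\dem$ and $P_\cH$ maps $\dem_{\ov\e}$ to $\La_{\ov\e}$ by $x_0\mapsto\phi(x_0)=\phi\circ P_H(x_0)$.

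\textbf{Main obstacle.} The only non-routine point is the continuous dependence statement: one needs a section $\phi\mapsto K_\tau$, defined at least on a $C^\infty$-neighbourhood of $\Id$, with $K_\tau\to0$ as $\phi\to\Id$. This is a standard Weinstein / generating-function construction on a tubular neighbourhood of the diagonal of $\La_{\ov\e}\times\La_{\ov\e}$; away from $\Id$ the dependence on $\phi$ is irrelevant to the statement. All remaining checks---smoothness of $\cH$ across the support boundary of $\tilde K$, preservation of the box-section data by $X_\cH$, and the equality $\Phi_\cH\inv(\La)=\dem$---are immediate consequences of the compact support of $\tilde K$ in $]-1,0[\,\times K$ and of $\dot t\equiv1$.
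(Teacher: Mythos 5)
Your approach coincides in spirit with the paper's: pass to the flow-box chart $\bchi$, add to $H=\e$ a time-dependent generator concentrated in $t\in\,]-1,0[$ that implements $\phi$, then pull back by $\bchi$ and extend by $H$ outside. The reparametrization trick $\tilde K(t,x)=\sigma'(t)K_{\sigma(t)}(x)$ and the verification of $\dot t\equiv 1$, $\Phi_{\cH}\inv(\La)=\dem$ and $P_\cH=\phi\circ P_H$ via conservation of $\cH$ are all essentially identical to the paper's computations.

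There is, however, a genuine gap in the smoothness claim. You define $\cH:=\e+\tilde K(t,x)$ on $\jD=\bchi(I\times J\times\La_{\ov\e})$ and $\cH:=H=\e$ outside $\jD$, and assert that $\cH$ is smooth "because $\tilde K$ is compactly supported in $]-1,0[\,\times K$." But that support controls only the $t$- and $x$-directions, not the energy direction. In the box coordinates, $\tilde K$ is \emph{independent of $\e$}, so it does not vanish as $\e$ tends to $\partial J$; for a point $(t,x)\in\,]-1,0[\,\times K$ with $\tilde K(t,x)\neq 0$, the two definitions $\e+\tilde K(t,x)$ and $\e$ disagree along the corresponding piece of $\partial\jD$, and $\cH$ fails to be even continuous there (the interval $J$ is bounded in general, so this boundary is genuinely attained in $M$). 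The paper handles this by inserting a cutoff in energy, $\bH(t,\e,x)=\e+\eta(\e)\,\ell(t,x)$ with $\eta\in C^\infty_c(J)$ equal to $1$ near $\ov\e$, so that the modification has compact support in \emph{all three} factors; since $\eta\equiv 1$ near $\ov\e$, the computation of the Poincar\'e map on $H^{-1}(\ov\e)$ is unchanged. Your proof can be repaired by the same insertion of $\eta(\e)$, but as written the extension step is incorrect.
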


\begin{proof} 
The map
\beq
\bchi:I\times J\times \La_\e\longrightarrow \Phi_H(I\times \La)
\eeq
of  Lemma~\ref{lem:hamflowbox} is a symplectic diffeomorphism such that $H\circ\bchi(t,\e,x)=\e$.
We first work in the coordinates $(t,\e,x)$ to construct our new Hamiltonian.
Let $\ell: I\times \La_\e$ be a $C^\infty$ nonautonomous Hamiltonian on $\La_\e$ with support in
$[-2/3,-1/3]\times K$, whose associated transition map between the times $\{-2/3\}$
and $\{-1/3\}$ coincides with $\phi$.
We set
\beq
\bH(t,\e,x)=\e+\eta(\e)\ell(t,x)
\eeq
where $\eta:J\to \R$ is a $C^\infty$ function with support in $J$, equal to $1$ in an open neighborhood
$J_*$ of $\e$, so that the function $\eta\,\ell$ has compact support contained in 
$[-2/3,-1/3]\times I\times K$. The associated vector field reads, for $(t,\e,x)\in I\times J_*\times \La_\e$:
\beq
\dot t=1,\quad \dot\e =\d_t\ell(t,x),\quad \dot x=X_\ell(t,x),
\eeq
where $X_\ell$ stands for the vector field generated by $\ell$ relatively to the induced form $\Om_{\ov \e}$
on~$\La_\e$. As a consequence, the Poincar\'e map of $\bH$ between the sections $\{-1\}\times(J\times \La_\e)$
and $\{0\}\times(J\times \La_\e)$ reads:
\beq
P_\bH(-1,\e,x)=\big(0,\e,\phi(x)\big).
\eeq
The function $\bH\circ\bchi\inv$ coincides with $H$ in the open set
\beq
\bchi(I\times J\times \La_\e)\ \setm\ \bchi([-2/3,-1/3]\times {\rm Supp}\,\eta \times K)
\eeq
and so continues as a $C^\infty$ function $\cH$ on $M$, which coincides with $H$ outside 
the latter factor.
Since $\bchi$ is symplectic, $\La$ is a box-section of $X_{\cH}$ with associated data $(I,J,\ov \e)$, 
and $\Phi_\cH\inv(\La)=\dem$.
The Poincar\'e maps $P_\bH$ and $P_\cH$ satisfy 
\beq
P_\cH=\bchi\circ P_\bH\circ\bchi\inv_{\vert \Phi_H\inv(\La_{\ov\e})},
\eeq
and
\beq
\bchi(0,\ov\e,x)=x,\qquad \bchi(-1,\ov\e,x)=\Phi_H\inv(x),
\eeq
hence, setting $z=\Phi_H\inv(x)$ for $x\in \La_{\ov\e}$:
\beq
P_\cH(z)=\bchi\circ P_\bH\circ\bchi\inv(z)=\bchi\circ P_\bH(-1,\ov\e,x)=\bchi\big(0,\e,\phi(x)\big)=\phi(x)
\eeq
so that
\beq
P_\cH=\phi\circ P_H.
\eeq
Finally one can choose $\ell$ so that $\ell\to 0$ in the $C^\infty$ topology when $\phi$ tends to $\Id$ in
the $C^\infty$ topology, from which our last assertion easily follows.
\end{proof}


\subsection{Perturbations of homoclinic maps in the \fom\ case}
In this section we consider a Hamiltonian $H\in C^\ka(\A^3)$ and fix a regular value $\e$ of $H$.

\begin{lemma}\label{lem:pertfomhom}
Assume that $\jC\subset H\inv(\e)$ with twist section $\Sig$ satisfies condition \fom, and let $\psi_H$ be
its homoclinic map.
Then for any $C^{\ka-2}$ Hamiltonian diffeomorphism $\sig:\Sig\righttoleftarrow$, there exists a $C^\ka$ Hamiltonian 
$\cH$ which coincides with $H$ in the neighborhood of $\jC$, such that $\jC$ still satisfies condition \fom\
for $\cH$ and that the associated homoclinic map $\psi_\cH$ satisfies
\beq
\psi_\cH=\sig\circ\psi_H;
\eeq
moreover $\norm{\cH-H}_\ka\to 0$ when $d_{\ka-2}(\sig,\Id)\to0$.
\end{lemma}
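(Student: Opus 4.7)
\medskip\noindent
\textbf{Setup and strategy.} The idea is to apply Lemma~\ref{lem:perturblemma} at a flow-box around the homoclinic intersection $\jI_*$, far from $\jC$. Since $\jI_*\subset\De_\e$ lies at positive distance from the cylinder, fix a compact $K\subset\jI_*$ and choose a box-section $\La\subset\De$ for $X_H$ whose slice $\La_\e=\La\cap H\inv(\e)$ is an open neighborhood of $K$ in $\De_\e$ and whose flow-box $\Phi_H([-1,0]\times\La)$ is disjoint from $\jC$. By composition, one may assume $\sig$ is close to the identity: a general $C^{\ka-2}$ Hamiltonian diffeomorphism of $\Sig$ factors as a finite product of Hamiltonian diffeomorphisms arbitrarily close to $\Id$, and the construction iterates.

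\medskip\noindent
\textbf{Effect of a perturbation on the homoclinic map.} For a compactly supported Hamiltonian diffeomorphism $\phi:\La_\e\righttoleftarrow$, Lemma~\ref{lem:perturblemma} produces $\cH$ agreeing with $H$ outside the flow-box. In particular $\jC$, its twist section $\Sig$, and the Poincar\'e return maps $P^\pm$ are unchanged; so are the forward $H$-dynamics from $\La_\e$ (hence $\Pi^+$ on $\jA^+\cap\La_\e$) and the $H$-unstable leaves entering $\dem_\e=\Phi_H\inv(\La_\e)$. Propagating the latter through the modified flow-box, where $P_\cH=\phi\circ P_H$, yields
\[
j^-_\cH=\phi\circ j^-,\qquad \Pi^+_\cH=\Pi^+\text{ on }\jA^+\cap\La_\e,\qquad P^\pm_\cH=P^\pm,
\]
so the new homoclinic map is $\psi_\cH=P^+\circ\Pi^+\circ\phi\circ j^-\circ(P^-)\inv$.

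\medskip\noindent
\textbf{Choice of $\phi$ on $\jI_*$.} The identity $\psi_\cH=\sig\circ\psi_H$ simplifies, after cancelling $(P^-)\inv$, to the requirement $P^+\circ\Pi^+\circ\phi=\sig\circ P^+\circ\Pi^+$ on $\jI_*$, i.e.
\[
\phi\big|_{\jI_*}\;=\;j^+\circ(P^+)\inv\circ\sig\circ P^+\circ\Pi^+\big|_{\jI_*}\;=:\;\phi_0.
\]
Because the right side ends with $j^+$ whose image lies in $\jA^+\cap\jA^-=\jI_*$, the map $\phi_0$ is a $C^{\ka-2}$ diffeomorphism of $\jI_*$ into itself; in particular $\phi$ preserves $\jI_*$ setwise and $\jI_{*,\cH}=\jI_*$ locally near $K$, so the transverse intersection required by~\fom\ is automatic.

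\medskip\noindent
\textbf{Extension and conclusion.} The intersection $\jI_*$ is a Lagrangian submanifold of the $4$-dimensional symplectic slice $\La_\e$, a standard consequence of the fact that the $\jA^\pm$ sit in coisotropic submanifolds whose characteristic foliations coincide with the center-stable/unstable foliations of $\jA$. Weinstein's Lagrangian neighborhood theorem identifies a tubular neighborhood of $\jI_*$ in $\La_\e$ with a neighborhood of the zero section in $T^*\jI_*$; the canonical cotangent lift of $\phi_0$ is a symplectomorphism of this neighborhood extending $\phi_0$, and for $\sig$ close to $\Id$ it is close to the identity. Multiplying by a Hamiltonian cutoff supported in a slightly larger neighborhood of $K$ (disjoint from $\d\La_\e$) produces a compactly supported Hamiltonian diffeomorphism $\phi:\La_\e\righttoleftarrow$ with $\phi|_{\jI_*}=\phi_0$. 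Feeding this into Lemma~\ref{lem:perturblemma} yields the required $\cH$, and $\|\cH-H\|_\ka\to 0$ as $d_{\ka-2}(\sig,\Id)\to 0$ by the final assertion of that lemma. The delicate step is the extension: promoting a prescribed diffeomorphism of the $2$-dimensional Lagrangian $\jI_*$ to a genuine Hamiltonian diffeomorphism of $\La_\e$ with controlled support and regularity uses in an essential way the fact that $\jI_*$ is Lagrangian --- a Hamiltonian-specific structural feature, not visible from condition \fom\ alone.
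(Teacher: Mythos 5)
Your overall strategy coincides with the paper's: restrict $\phi$ on $\jI_*$ to be the conjugate $\phi_0 = j^+\circ(P^+)^{-1}\circ\sig\circ P^+\circ\Pi^+$, feed a compactly supported Hamiltonian extension into Lemma~\ref{lem:perturblemma}, and track the effect on $j^-$. Your computation of the effect of the perturbation on $j^-$, $\Pi^+$, $P^\pm$ matches the paper's. But the extension step has a genuine error, and there is a second (minor but real) omission.

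The error: you assert that $\jI_*$ is a \emph{Lagrangian} submanifold of $\De_\e$ and invoke Weinstein's Lagrangian neighborhood theorem plus the cotangent lift. This is wrong -- $\jI_*$ is \emph{symplectic}, not Lagrangian. Indeed $\Pi^+(\jI_*)$ is a transverse section of the Hamiltonian flow in $\jC_*\subset\jA$, hence carries a nondegenerate induced $2$-form, and $\Pi^+|_{\jA^+}$ is (conformally) symplectic since $\jA^+$ is a transverse slice of the coisotropic $W^+(\jA)$ realizing the symplectic reduction; therefore the form induced on $\jI_*\subset\jA^+$ is nondegenerate. (This is also implicit in the paper's remark that $\psi$ is a Hamiltonian diffeomorphism of $\Sig$, which forces the pieces of the composition to be symplectic between $2$-dimensional symplectic surfaces.) Your coisotropy-based heuristic for Lagrangianity is not a valid inference: the transverse slices of the coisotropic $W^\pm$ are symplectic, not isotropic. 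With $\jI_*$ symplectic, the cotangent lift is inapplicable. The correct extension -- and what \cite{Mar16}, cited by the paper, supplies -- uses that $\phi_0$ is a Hamiltonian isotopy of the \emph{symplectic} surface $\jI_*$: choose a generating time-dependent Hamiltonian $h_t$ on $\jI_*$, identify a tubular neighborhood in $\De_\e$ with a neighborhood of the zero section of the symplectic normal bundle, extend $h_t$ by pullback along the bundle projection and cut off; the resulting flow preserves $\jI_*$ and restricts to $\phi_0$ there.

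The omission: Lemma~\ref{lem:perturblemma} is stated for $H\in C^\infty(M)$, while here $H$ is only $C^\ka$. The paper first invokes Zehnder's analytic-smoothing technique to produce $H_*$ coinciding with $H$ near $\jC$, $C^\infty$ near $\De$, and $C^\ka$-close to $H$, and replaces $\sig$ by $\sig_* = \sig\circ\psi_H\circ\psi_{H_*}^{-1}$ so that it suffices to prove the statement for $(H_*,\sig_*)$. Your reduction ``one may assume $\sig$ close to $\Id$'' addresses a different concern and does not resolve the regularity mismatch; this step should be included.
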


\begin{proof}   
By the $C^\infty$-smoothing technique of \cite{Ze76}, there exists a Hamiltonian $H_*$, arbitrarily
close to $H$ in the $C^\ka$ topology and coincides with $H$ in the neighborhood of $\jC$, such 
that $\jC$ still satisfies condition \fom\ with respect to $\De$ and such that $H_*$ is $C^\infty$
in the neighborhood of $\De$. Let $\psi_{\cH_*}$ be the associated homoclinic map and set
\beq
\sig_*=\sig\circ\psi_H\circ\psi_{H_*}\inv,
\eeq
so that $\sig_*$ is a Hamiltonian diffeomorphism of $\Sig$, arbitrarily close to $\sig$ in the
$C^{\ka-2}$ topology.
It is therefore enough to prove the result for $H_*$ and $\sig_*$ instead of $H$ and~$\sig$. So 
one can assume without loss of generality that $H$ is $C^\infty$ in the neighborhood of $\De$.

By compactness of $\jC^-$,  one can find a neighborhood $\La$ of $\jC^-$ in $\De$ which is a 
box-section for $X_H$, with data $(I,J,\e)$, where $I$ contains some interval $[-\tau,0]$. One can
moreover assume that $H$ is $C^\infty$ in the neighborhood of $\Phi_H(\ov I\times \ov\La)$.

Set
\beq
\phi=(P_H^+\circ\Pi_H^+)\inv\circ\sig\circ(P_H^+\circ\Pi_H^+)_{\vert \jI}
\eeq
where $\jI=\jC^+\cap\jC^-$, so that $\phi$ is a Hamiltonian diffeomorphism of $\jI$. We proved
in \cite{Mar16} the existence of a Hamiltonian diffeomorphism $\bphi$ of $\De_\e$ which continues
$\phi$. 

By Lemma~\ref{lem:perturblemma}, there exists a Hamiltonian $\cH\in C^\ka(M)$ such that:
\vskip1mm
$\bu$ $\La$ is a box-section of $X_{\cH}$ with associated data $(I,J,\ov \e)$, and $\Phi_\cH\inv(\La)=\dem$,
\vskip1mm
$\bu$ $\cH$  coincides with $H$ outside $\Phi_H(]-1,0[\times \La)$,
\vskip1mm
$\bu$  the $\Phi_\cH$-induced Poincar\'e map between $\La^{-}_{\ov\e}$ and $\La_{\ov\e}$ satisfies
\beq
P_\cH=\phi\circ P_H.
\eeq
\vskip0mm
$\bu$ $\cH$ tends to $H$ in the $C^\ka$ topology when  
$\phi\to \Id$ in the $C^{\ka-2}$ topology.
\vskip1mm
\noindent
Fix $\xi\in\jA^-$ and set $\eta=\Phi_H\invt(\xi)$. 
Since $H$ and $\cH$ coincide outside the perturbation box
\beq
\Pi_H^-(\eta)=\Pi_\cH^-(\eta).
\eeq
By equivariance of the unstable foliation
of $W^-(\jA)$:
\beq
\Pi_\cH^-\big(\Phi_\cH(\eta)\big)=\Pi_H^-\big(\Phi_H(\eta)\big).
\eeq
Moreover, if $\xi\in\jC^-$,
\beq
\Phi_\cH^\tau(\eta)=\bphi\circ\Phi_H^\tau(\eta)
\eeq
which proves that
\beq
\Pi_\cH^-\circ\bphi(\xi)=\Pi_H^-(\xi),\qquad \forall \xi\in\jC^-.
\eeq
In particular, since $\bphi$ leaves $\jI$ invariant
\beq
\Pi_\cH^-(\jI)=\Pi_H^-(\jI).
\eeq
and the Poincar\'e maps $P_\cH^-$ and $P_H^-$ coincide. Moreover, since the perturbation
does not affect the stable manifold $W^+(\jA)$:
\beq
\psi_\cH(x)=P_H^+\circ\Pi_H^+\circ j_{\cH}^-\circ (P_H^-)\inv(x),\qquad \forall x\in \Sig.
\eeq
Hence
\beq
\psi_\cH(x)=P_H^+\circ\Pi_H^+\circ\phi\circ j_H^-\circ(P_H^-)\inv(x)=\sig\circ\psi_H(x),\qquad \forall x\in \Sig,
\eeq
which proves our claim. Finally, since $d_{\ka-2}(\bphi,\Id)\to 0$ when $d_{\ka-2}(\sig,\Id)\to 0$ (see \cite{Mar16}), 
the last statement comes from Lemma~\ref{lem:perturblemma}.
\end{proof}

The following lemma for the heteroclinic condition \fom\ is proved exactly in the same way as 
the previous one.

\begin{lemma}
Assume that the pair $(\jC_0,\jC_1)$ of compact invariant cylinders with twist sections 
$\Sig_i$, contained in $H\inv(\e)$, satisfies  condition \fet, and let $\psi_H$ be
its heteroclinic map.
Then for any $C^{\ka-2}$ Hamiltonian diffeomorphism $\sig:\Sig_2\righttoleftarrow$, there exists a 
$C^\ka$ Hamiltonian  $\cH$ which coincides with $H$ in the neighborhood of $\jC_0$ and $\jC_1$,
such that $(\jC_0,\jC_1)$  still satisfies condition \fet\ for $\cH$ and that the associated heteroclinic 
map $\psi_\cH$ satisfies
\beq
\psi_\cH=\sig\circ\psi_H;
\eeq
moreover $\norm{\cH-H}_\ka\to 0$ when $d_{\ka-2}(\sig,\Id)\to0$.
\end{lemma}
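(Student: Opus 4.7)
The plan is to mimic the proof of the preceding homoclinic lemma line by line, with the obvious relabelling, using Lemma 3.2 as the main tool and the heteroclinic data $(\jA_0, \jA_1, \De, \jI_*)$ of condition \fet\ in place of the homoclinic data.

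First I would invoke Zehnder's $C^\infty$-smoothing to replace $H$ by a $C^\ka$-close Hamiltonian $H_*$ that agrees with $H$ on neighbourhoods of $\jC_0$ and $\jC_1$, still satisfies \fet\ with respect to the same $\De$, and is $C^\infty$ in a neighbourhood of $\De$; I would then absorb the smoothing error into $\sig$ by setting $\sig_* = \sig\circ\psi_H\circ\psi_{H_*}\inv$, reducing to the $C^\infty$ case near $\De$. Next, conjugating $\sig$ back to the heteroclinic intersection $\jI_* \subset \De_\e$ via the local diffeomorphism $P_1\circ\Pi_1^+:\jI_*\to\Sig_1$ produces a Hamiltonian diffeomorphism
\[
\phi := (P_1\circ\Pi_1^+)\inv\circ\sig\circ(P_1\circ\Pi_1^+)_{\vert \jI_*}
\]
of $\jI_*$, which the continuation result of \cite{Mar16} extends to a Hamiltonian diffeomorphism $\bphi$ of $\De_\e$, compactly supported in a neighbourhood of $\jI_*$ and $C^{\ka-2}$-small when $\sig$ is.

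The core step is to apply Lemma~3.2. Using Lemma~3.1 I choose a box-section $\La\subset\De$ for $X_{H_*}$, with data $(I,J,\e)$, $[-1,0]\subset I$, such that $\La$ is a neighbourhood of a compact set $K\subset\De_\e$ containing $\mathrm{Supp}(\bphi)$ in its relative interior, and such that the flow box $\Phi_{H_*}(\,]{-1},0[\,\times\La)$ lies strictly on the $-$ side of $\De$, disjoint from $\jC_0, \jC_1$ and from the neighbourhoods of $\jA_0, \jA_1$ where we wish $H_*$ unchanged. This is possible because $\De$ is transverse to $X_{H_*}$ and the cylinders are compact, invariant, and disjoint from $\De$. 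Lemma~3.2 then yields $\cH\in C^\ka(\A^3)$ equal to $H_*$ outside this flow box, whose Poincar\'e map across $\La_{\ov\e}$ is $P_\cH=\bphi\circ P_{H_*}$, with $\norm{\cH-H_*}_\ka\to 0$ as $\sig\to\Id$ in $C^{\ka-2}$.

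The verification is the heteroclinic transcription of the homoclinic computation in Lemma~3.3. Since $\cH=H_*$ near $\jC_0$ and $\jC_1$, all normal-hyperbolicity data, the associated annuli, the twist sections $\Sig_0, \Sig_1$, and the Poincar\'e maps $P_0, P_1$ are preserved, so $(\jC_0,\jC_1)$ still satisfies \fet\ with the same $\De$. Because the perturbation box sits strictly on the $-$ side of $\De$, orbits in $W^+(\jA_1)$ starting on $\De$ flow forward under $\cH=H_*$, and in the relevant region $\til\jA_1^+$ and $\Pi_1^+$ are unchanged. For the unstable side, equivariance of the $W^-(\jA_0)$-foliation combined with $\Phi_\cH=\bphi\circ\Phi_{H_*}$ on $\La$ gives, exactly as in the homoclinic case, $\Pi_{0,\cH}^-\circ\bphi=\Pi_{0,H_*}^-$ on $\til\jA_0^-$, so that $j_{0,\cH}^-=\bphi\circ j_{0,H_*}^-$. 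Substituting into the defining formula for the heteroclinic map and using the definition of $\phi$ yields
\[
\psi_\cH = P_1\circ\Pi_{1,H_*}^+\circ\bphi\circ j_{0,H_*}^-\circ P_0\inv = \sig\circ\psi_{H_*},
\]
and the $C^\ka$-closeness of $\cH$ to $H$ follows from Lemma~3.2 combined with the continuation bound of \cite{Mar16}.

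The only genuine issue, and the step that must be checked rather than copied, is the geometric bookkeeping in Step~3: arranging a single localized flow box that covers a neighbourhood of $\jI_*$ in $\De$, sits strictly between the two disjoint cylinders, and does not propagate backwards into the region where $W^+(\jA_1)$ is relevant for computing $\Pi_1^+$. Once this placement is secured by transversality of $\De$ and compactness of the cylinders, the rest of the proof is identical to the homoclinic case.
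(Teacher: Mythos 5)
Your proposal is correct and is exactly what the paper intends: the paper disposes of this lemma in a single sentence, saying it "is proved exactly in the same way as the previous one," and your line-by-line transcription of the homoclinic proof—Zehnder smoothing, conjugating $\sig$ to a Hamiltonian diffeomorphism of $\jI_*$ via $P_1\circ\Pi_1^+$, continuing to $\bphi$ on $\De_\e$ via \cite{Mar16}, invoking Lemma~\ref{lem:perturblemma} on a box-section sitting strictly on the unstable side of $\De$, and then using equivariance of the $W^-(\jA_0)$-foliation—faithfully spells out that identification. Your flagging of the flow-box placement (so that the perturbation does not touch $W^+(\jA_1)$ nor either cylinder) as the one point requiring verification rather than transcription is precisely the right thing to check and is what makes the "same proof" claim legitimate.
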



\subsection{Perturbations of characteristic projections in the \pom\ case}
We consider a Hamiltonian $H\in C^\ka(\A^3)$ and fix a regular value $\e$ of $H$.

\begin{lemma}\label{lem:perturbpom}
Assume that $\jC\subset H\inv(\e)$ with twist section $\Sig$ satisfies condition \pom. Then there
exists a compact neighborhood $K$ of $\jC^-$ in $\De_\e$ such that for any $C^{\ka-2}$ Hamiltonian 
diffeomorphism $\phi$ of $\De_\e$ with support in $K$, there exists a  $C^\ka$ Hamiltonian 
$\cH$ which coincides with $H$ in the neighborhood of $\jC$, such that $\jC$
still satisfies condition \pom\ for $\cH$ and that the associated characteristic projection satisfies
\beq
(\Pi^-_\cH)_{\vert \jC^-}=(\Pi^-_\cH)_{\vert \jC^-}\circ\phi;
\eeq
moreover $\norm{\cH-H}_\ka\to 0$ when $d_{\ka-2}(\phi,\Id)\to0$.
\end{lemma}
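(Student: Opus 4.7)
The plan is to imitate the strategy of Lemma~\ref{lem:pertfomhom}, adapted to the partial-section setting. First I would apply Zehnder's $C^\infty$-smoothing to reduce to the case where $H$ is $C^\infty$ in a neighborhood of $\De$; since \pom\ is an open condition under $C^\ka$-perturbations which preserve $\jC$ and $\De$, this costs nothing. Next, I would take $K$ to be a compact neighborhood of $\jC^-$ in the relative interior of $\De_\e$, small enough to sit inside a box-section $\La\subset\De$ for $X_H$ with data $(I,J,\e)$ and $[-1,0]\subset I$, chosen so that the flow box $\Phi_H([-1,0]\times\ov\La)$ is disjoint from $\jC$. This disjointness is available because $\jC^-$ lies in a homoclinic branch of $W^-(\jA)\cap\De$, separated from $\jC$ itself by the transversality of $\De$ to $X_H$ and the first-return structure.

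I would then feed $\phi$ into Lemma~\ref{lem:perturblemma} to obtain a $C^\ka$-Hamiltonian $\cH$ coinciding with $H$ outside $\Phi_H(]-1,0[\times\La)$, with the same box-section data, and Poincar\'e map satisfying $P_\cH=\phi\circ P_H$. Since the perturbation avoids $\jC$, the cylinder and its symplectic $4$-annulus $\jA$ remain $\cH$-invariant and normally hyperbolic, and $W^+_\cH(\jA)$ coincides with $W^+_H(\jA)$ on the side of $\De$. The persistence of \pom\ for $\cH$ then reduces to standard openness arguments: for $\phi$ close to $\Id$ in $C^{\ka-2}$, the perturbed $\jA^-_\cH$ is still transverse to the characteristic foliation, and the image $\Psi^{\rm sec}(\jT^-_\cH)=\Psi^{\rm sec}(\phi(\jT^-_H))$ of each essential torus remains a Lipschitz graph over $\T^2$.

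For the key identity on characteristic projections, I would fix $\xi\in\jC^-$, take $\tau>1$ large enough that $\eta:=\Phi_H^{-\tau}(\xi)$ lies past the perturbation box, and argue exactly as in the \fom\ case. On one hand, $\Pi^-_\cH(\eta)=\Pi^-_H(\eta)$, since the strong unstable foliation at $\eta$ is determined by past dynamics, which $H$ and $\cH$ share near $\jA$ and along the past orbit of $\eta$. On the other hand, $\Phi_\cH^\tau(\eta)=\phi(\xi)$, thanks to $P_\cH=\phi\circ P_H$ together with $H=\cH$ outside the box. Equivariance of the characteristic projection under the flow (and invariance of $\jA$ under both flows) then yields
\[
\Pi^-_\cH(\phi(\xi))=\Phi_\cH^\tau\bigl(\Pi^-_\cH(\eta)\bigr)=\Phi_H^\tau\bigl(\Pi^-_H(\eta)\bigr)=\Pi^-_H(\xi),
\]
which is the required relation $(\Pi^-_H)_{\vert\jC^-}=(\Pi^-_\cH)_{\vert\jC^-}\circ\phi$ (the statement as typed contains an evident typo, with a second $\cH$ where an $H$ is intended). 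The $C^\ka$-closeness of $\cH$ to $H$ when $\phi\to\Id$ in $C^{\ka-2}$ transfers directly from the corresponding conclusion of Lemma~\ref{lem:perturblemma}.

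The hard part will be the clean verification that \pom\ persists, since this is a bundle of compatibility conditions involving simultaneously the diffeomorphism type of $\jA^\pm$, the Lipschitz-graph property of the essential tori under $\Psi^{\rm ann}$, and the Lipschitz-graph property of their images $\Psi^{\rm sec}(\jT^\pm_\cH)$. Preserving all of them forces a quantitative smallness requirement on $\phi$ in $C^{\ka-2}$ and on the diameter of $K$, which is precisely what is captured by the final continuity clause of the lemma.
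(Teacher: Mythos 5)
Your proposal reproduces the paper's argument: Zehnder smoothing near $\De$, a box-section $\La\subset\De_\e$ around $\jC^-$ with the flow box kept away from $\jC$, an application of Lemma~\ref{lem:perturblemma} to realize $P_\cH=\phi\circ P_H$, and then the same equivariance computation (pulling $\xi\in\jC^-$ back past the perturbation box to $\eta$ and pushing forward) used in Lemma~\ref{lem:pertfomhom} to obtain $(\Pi^-_H)_{\vert\jC^-}=(\Pi^-_\cH)_{\vert\jC^-}\circ\phi$. You also correctly spot that the displayed equation in the statement has a subscript $\cH$ where $H$ is intended, and you make explicit the persistence of \pom\ and the disjointness of the flow box from $\jC$, both of which the paper leaves implicit in its telegraphic "follows the same lines" phrasing.
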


\begin{proof}
The proof is follows the same lines as that of Lemma~\ref{lem:perturblemma}. One can assume that $H$ 
is $C^\infty$ in  the neighborhood of $\De$.  One first constructs a box-section $\La\subset\De_\e$ with
data $(I,J,\e)$,  where $I$ contains some interval $[-\tau,0]$, such that moreover
$H$ is $C^\infty$ in the neighborhood of $\Phi_H(\ov I\times \ov\La)$.
Then, by Lemma~\ref{lem:perturblemma}, there exists a Hamiltonian $\cH\in C^\ka(M)$ such that:
\vskip1mm
$\bu$ $\La$ is a box-section of $X_{\cH}$ with associated data $(I,J,\ov \e)$, and $\Phi_\cH\inv(\La)=\dem$,
\vskip1mm
$\bu$ $\cH$  coincides with $H$ outside $\Phi_H(]-1,0[\times \La)$,
\vskip1mm
$\bu$  the $\Phi_\cH$-induced Poincar\'e map between $\La^{-}_{\ov\e}$ and $\La_{\ov\e}$ satisfies
\beq
P_\cH=\phi\circ P_H.
\eeq
\vskip0mm
$\bu$ $\cH$ tends to $H$ in the $C^\ka$ topology when  
$\phi\to \Id$ in the $C^{\ka-2}$ topology.
\vskip1mm
\noindent One then deduces exactly as in Lemma~\ref{lem:perturblemma} that
\beq
(\Pi^-_\cH)_{\vert \jC^-}=(\Pi^-_\cH)_{\vert \jC^-}\circ\phi;
\eeq
and the last statement is immediate.
\end{proof}


\section{Proofs of Theorem~\ref{thm:main2} and Theorem~\ref{thm:main1}}\label{Sec:proof2}

We now use the results of the previous section and prove that admissible chains can be made
good chains by arbitrarily small perturbations of the Hamiltonian, which is the content of 
Theorem~\ref{thm:main2}. We then prove Theorem~\ref{thm:main1}, which relies on the
results of \cite{Mar,GM}.


\subsection{Cylinders with condition \fom}
We consider a proper Hamiltonian $H\in C^\ka(\A^3)$ and fix a regular value $\e$ of $H$.

\begin{lemma}\label{lem:goodcylfom}
Assume that $\jC\subset H\inv(\e)$ satisfies condition~\fomu.
Then for any $\al>0$, there exists a $C^\ka$ Hamiltonian  $\cH$
such that $\jC$ is a good cylinder for $\cH$, and which satisfies
\beq
\norm{\cH-H}_\ka<\al.
\eeq
\end{lemma}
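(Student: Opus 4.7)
The plan is to invoke Lemma~\ref{lem:pertfomhom}: since the perturbed Hamiltonian $\cH$ it produces coincides with $H$ in a neighborhood of $\jC$, the return map $\ph$ on the twist section $\Sig$ and hence the family $\Ess(\ph)$ are unchanged by the perturbation. Thus the task reduces to finding a $C^{\ka-2}$ Hamiltonian diffeomorphism $\sig:\Sig\righttoleftarrow$, arbitrarily $C^{\ka-2}$-close to $\Id$, such that
$$
\sig\big(\psi_H(\Ga)\big)\neq \Ga \qquad \forall\,\Ga\in\Ess(\ph).
$$
Setting $\psi_\cH=\sig\circ\psi_H$ then realizes the good cylinder condition for $\cH$ in the \fomu\ case, while the $C^\ka$ norm $\norm{\cH-H}_\ka$ is controlled by the $C^{\ka-2}$ size of $\sig$ near $\Id$, which can be made $<\al$ by Lemma~\ref{lem:pertfomhom}.

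To construct $\sig$, I would take $\sig_\tau=\Phi_F^\tau$ for small $\tau>0$, where $F$ is a compactly supported smooth Hamiltonian in the interior of $\Sig$, chosen so that the Hamiltonian vector field $X_F$ fails to be everywhere tangent to any essential circle. A concrete model, in the coordinates provided by $j_\Sig:\bA\to\Sig$, is $F(\th,r)=\chi(r)\cos\th$ with $\chi$ a smooth bump function supported on a large subinterval of $(a,b)$. For any Lipschitz graph $\Ga$ meeting $\{\chi>0\}$, the restriction $F_{\vert\Ga}$ is non-constant, so $X_F$ is not everywhere tangent to $\Ga$ and hence $\Phi_F^\tau(\Ga)\neq \Ga$ for all small $\tau\neq 0$. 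The same applies with $\psi_H(\Ga)$ in place of $\Ga$, since $\psi_H$ is a Hamiltonian diffeomorphism and $\psi_H(\Ga)$ is again a Lipschitz graph when $\Ga$ is. Using continuity of the map $\Ga\mapsto\psi_H(\Ga)$ and Hausdorff-compactness of $\Ess(\ph)$ (a closed, totally ordered family of Lipschitz graphs by Birkhoff), one then argues that the set of "bad" parameters $\tau\in(0,\tau_0)$, for which $\sig_\tau(\psi_H(\Ga))=\Ga$ holds for at least one $\Ga$, is meager, so that a suitable small $\tau>0$ produces the required $\sig=\sig_\tau$.

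The main obstacle is the uncountability of $\Ess(\ph)$ combined with the fact that essential circles accumulate on $\d\Sig$, whereas $\sig$ must be the identity near $\d\Sig$. The argument must therefore combine an explicit displacement of circles meeting the interior region $\{\chi>0\}$ via the flow of $X_F$, with a continuity/compactness argument handling circles close to $\d\Sig$. One natural way to close this gap is to compose several time-$\tau_i$ maps $\Phi_{F_i}^{\tau_i}$ for a finite family of Hamiltonians $F_i$ whose supports together cover $\Ess(\ph)$ up to an arbitrarily thin collar of $\d\Sig$, and to observe that for $\Ga$ in such a collar, $\sig$ is so close to $\Id$ in a neighborhood of $\psi_H(\Ga)$ that $\sig(\psi_H(\Ga))$ remains Hausdorff-close to $\psi_H(\Ga)$, while $\psi_H(\Ga)$ is itself uniformly away from $\Ga$ for $\Ga$ in a small enough boundary collar (by a compactness argument applied to the open set $\{\Ga:\psi_H(\Ga)\neq \Ga\}$ inside $\Ess(\ph)$, together with the fact that, if there were a sequence $\Ga_n\to \d\Sig$ with $\psi_H(\Ga_n)=\Ga_n$, the limit would provide an essential circle fixed by $\psi_H$ against which one would apply the previous interior argument).
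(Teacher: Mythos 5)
Your proposal takes a genuinely different route from the paper and has gaps. The paper's proof has two black boxes that you replace with a hands-on construction. First, it invokes Robinson's theorem [R] to perturb $H$ so that the Poincar\'e return map $\ph$ on $\Sig$ becomes a \emph{special twist map} (no essential circle of rational rotation number, Birkhoff zones structure). Second, it invokes Moeckel's theorem [M02] as a genericity black box: for a generic Hamiltonian diffeomorphism $\sig$ of $\Sig$ close to $\Id$, the return map $\ph$ and the conjugate $\sig\inv\circ\psi_H\circ\sig$ share no common essential invariant circle. It then feeds $\sig'=\sig\inv\circ\psi_H\circ\sig\circ\psi_H\inv$ into Lemma~\ref{lem:pertfomhom} to realize $\psi_\cH=\sig\inv\circ\psi_H\circ\sig$. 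You omit the Robinson step entirely and try to prove the Moeckel-type genericity directly via $\sig_\tau=\Phi_F^\tau$.

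The two genuine gaps are precisely where you wave your hands. (i) Your claim that ``the set of bad parameters $\tau$ for which $\sig_\tau(\psi_H(\Ga))=\Ga$ for some $\Ga\in\Ess(\ph)$ is meager'' is not an easy Baire argument: $\Ess(\ph)$ is an uncountable, possibly Cantor-like, Hausdorff-closed family of Lipschitz graphs, and for a fixed small $\tau\neq 0$ you must rule out \emph{every} member at once; without a parametric transversality argument controlling how the bad $\tau$-set varies with $\Ga$, one cannot exclude that the uncountable union of these bad sets covers every small $\tau$. This is exactly the technical content of [M02] that the paper cites and does not re-prove. (ii) Your boundary-collar argument is circular. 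The perturbation $\phi$ of Lemma~\ref{lem:perturblemma} must be supported in a compact subset of the relative interior of the section, so $\sig$ is the identity near $\d\Sig$, and if the limit circle $\d_\bu\Sig$ (or $\d^\bu\Sig$) is $\psi_H$-invariant you propose to ``apply the previous interior argument against it'' --- but that argument requires $\Ga$ to meet $\{\chi>0\}$, which $\d\Sig$ by construction does not. The paper avoids this by working with continuations $\Sig_*\supset\Sig$ (so the boundary circles become interior) and by the special-twist-map normalization via [R]; neither device appears in your plan.
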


\begin{proof} By \cite{R} applied to the symplectic manifold $\jA$, there exists an arbitrarily 
small perturbation $\til H$ of $H$ in the $C^\ka$ topology which admits an invariant annulus $\til \jA$
and an invariant cylinder $\til\jC$ which are arbitrarily $C^\ka$-close to the initial ones,
such that  any periodic orbit of $\til H$ is hyperbolic in $\jC$ or elliptic 
with nondegenerate torsion and KAM nonresonance conditions. 

As a consequence, one can choose $\til H$ such that $\norm{H-\til H}_{C^\ka}<\al/2$, 
$\til\jC$ still satisfies 
condition \fomu\ and admits a section $\Sig$  for which the Poincar\'e
map $\til\ph$ is a special twist map. We can therefore assume that these properties
are satisfied by the initial Hamiltonian $H$ and we get rid of the $\til{\phantom{u}}$.

Let $\psi$ be the homoclinic map of $\jC$ for $H$.
By \cite{M02}, there exists a $C^\ka$ Hamiltonian diffeomorphism $\sig$ of $\Sig$,
arbitrarily close to $\Id$, such that the return map  $\ph:\Sig\righttoleftarrow$ and the map 
$\sig\inv\circ\psi\circ\sig:\Sig\righttoleftarrow$
have no common essential invariant circle. Observe that $[\sig\inv\circ\psi\circ\sig\circ\psi\inv]$ 
is a Hamiltonian diffeomorphism of $\Sig$, which tends to $I$ in the $C^\ka$ topology when 
$\sig$ tends to $\Id$ in the $C^\ka$ topology. So,
by Lemma~\ref{lem:pertfomhom}, if $\sig$ is
close enough to the indentity, there
exists a Hamiltonian $\cH$ which coincides with $H$ in the neighborhood of $\jC$,
such that $\jC$ still satisfies condition~\fomu, with new homoclinic map 
\beq
\psi_{\cH}=[\sig\inv\circ\psi\circ\sig\circ\psi\inv]\circ\psi=\sig\inv\circ\psi\circ\sig.
\eeq
One can choose $\sig$ so that $\norm{\cH-H}_{\ka}<\al$. By construction, the cylinder
$\jC$ is now a good cylinder at energy $\e$ for $\cH$.
\end{proof}


\subsection{Cylinders with condition \pom}
We consider a proper Hamiltonian $H\in C^\ka(\A^3)$ and fix a regular value $\e$ of $H$.
We prove the following analog of Lemma~\ref{lem:goodcylfom} for condition \pomu.

\begin{lemma}\label{lem:goodcylpom}
Assume that $\jC\subset H\inv(\e)$ satisfies condition~\pomu.
Then for any $\al>0$, there exists a $C^\ka$ Hamiltonian  $\cH$
such that $\jC$ is a good cylinder for $\cH$, and which satisfies
\beq
\norm{\cH-H}_\ka<\al.
\eeq
\end{lemma}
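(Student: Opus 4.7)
The plan is to imitate the proof of Lemma~\ref{lem:goodcylfom}, using Lemma~\ref{lem:perturbpom} in place of Lemma~\ref{lem:pertfomhom} and exploiting the chart structure provided by \pomu\ to produce splittings.

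First I would carry out the same preliminary smoothing as in Lemma~\ref{lem:goodcylfom}: applying the result of \cite{R} to the symplectic annulus $\jA$ associated with $\jC$, one obtains an arbitrarily $C^\ka$-close Hamiltonian which still satisfies \pomu\ and whose Poincar\'e return map on the twist section $\Sig=j_\Sig(\bA)$ is a \emph{special} twist map in the sense recalled in Section~\ref{sec:goodcylchains}. This step does not use \pomu\ at all, so I may assume that $H$ itself has this property, and in particular that every $\Ga\in\Ess(\ph)$ is either an upper boundary of a Birkhoff zone or is accumulated from below by a sequence of essential circles.

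Next comes the core of the argument. Condition \pomu\ provides exact-symplectic charts $\Psi^{\rm ann}$ on $\jA$ and $\Psi^{\rm sec}$ on $\De_\e$ in which every essential torus $\jT\in\Tess(\jC)$ and its stable/unstable traces $\jT^\pm=j^\pm(\jT)$ are Lipschitz graphs over $\T^2$. The homoclinic correspondence attached to $\jC$ can be read off, torus by torus, from the intersection pattern of the two graphs $\Psi^{\rm sec}(\jT^+)$ and $\Psi^{\rm sec}(\jT^-)$. I would apply Lemma~\ref{lem:perturbpom} to replace $H$ by $\cH$ whose characteristic projection satisfies $\Pi^-_\cH{}_{|\jC^-}=\Pi^-_H{}_{|\jC^-}\circ\phi$ for a well-chosen $\phi$ supported in a compact neighborhood of $\jC^-$ in $\De_\e$; since this perturbation leaves $W^+(\jA)$, the return map $\ph$, and hence both $\Ess(\ph)$ and $\Tess(\jC)$ unchanged, the task reduces to designing $\phi$ so that after the replacement there is a splitting domain on each $\Ga\in\Ess(\ph)$ that is not an upper boundary of a Birkhoff zone, and a simple splitting arc on each $\Ga$ that is.

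The natural candidate is a $\phi$ whose lift through $\Psi^{\rm sec}$ is the time-one map of an autonomous Hamiltonian depending only on the action variable in a neighborhood of the closure of the action-spectrum of $\Tess(\jC)$. Such a $\phi$ translates each Lipschitz graph $\Psi^{\rm sec}(\jT^-)$ in the fibre direction by an amount that varies monotonically with the action level of $\jT$; reading the deformed intersection $\Psi^{\rm sec}(\jT^+)\cap\phi(\Psi^{\rm sec}(\jT^-))$ back through $\Psi^{\rm ann}$ produces, on each $\Ga$ that is a Birkhoff-zone upper boundary, a single short transverse intersection that pulls back to a simple splitting arc, and on each $\Ga$ accumulated from below, an open family of intersections on nearby essential circles whose union is the boundary of a splitting domain based at $\Ga$. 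Taking $\phi$ sufficiently $C^{\ka-2}$-close to the identity and using the last clause of Lemma~\ref{lem:perturbpom} yields $\norm{\cH-H}_\ka<\al$.

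The main obstacle will be Step~3: arranging with a \emph{single} perturbation $\phi$ that the splitting condition holds simultaneously on the entire (in general uncountable) family $\Ess(\ph)$, with the dichotomy between splitting domains and simple splitting arcs dictated by the Birkhoff-zone structure. The nonvanishing of the action-derivative of the generating Hamiltonian of $\phi$ is the quantitative input that makes this possible, but one must verify that the resulting monotone vertical shift genuinely yields a two-dimensional splitting domain, and not merely an arc, on the accumulated side — this is where the specialness property of $\ph$ obtained in Step~1 is actually used.
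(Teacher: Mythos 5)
Your first step (applying \cite{R} to obtain a special twist return map) matches the paper exactly. After that, however, the proposal diverges sharply from the paper's argument, and there is a genuine gap in the way you produce the splittings.

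The paper does not attempt to design a single explicit $\phi$ by hand. Instead it introduces a Banach space $\H_0$ of \emph{pairs} $(\ell_+,\ell_-)$ of nonautonomous Hamiltonians supported near $\jC^+$ and $\jC^-$ (so both characteristic projections $\Pi^+$ and $\Pi^-$ can be perturbed independently), and then applies the Abraham parametric transversality theorem repeatedly (Lemma~\ref{lem:genericprop}) to show that on an open dense set $\H_5$ of such pairs, the intersections $\jC^+\cap\jC^-$, the tangency sets of the characteristic vector fields, and the folds/cusps are all in general position. The existence of splitting arcs is then obtained \emph{indirectly}, by contradiction: for each essential torus $\jT$, if every point of $\jT^+\cap\jT^-$ were one-sided in a coherent manner, one could build a small Hamiltonian perturbation through Lemma~\ref{lem:perturbpom} that disconnects $\jT^+$ from $\jT^-$, contradicting Lemma~\ref{lem:existint}, which uses the exact-Lagrangian-graph structure supplied by \pomu\ to force $\jT^+\cap\jT^-\neq\emptyset$. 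Finally, the Birkhoff-zone boundaries are treated separately, exploiting that there are only countably many of them.

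Your explicit choice of $\phi$ --- the time-one map of an autonomous Hamiltonian depending only on the action in the $\Psi^{\rm sec}$ chart --- does not do what you claim. Such a Hamiltonian generates a flow that shifts the \emph{angle} variable by an amount depending on the action; it does not "translate each Lipschitz graph in the fibre direction." More seriously, even if you replaced it by something that does shift in the fibre, shifting $\Psi^{\rm sec}(\jT^-)$ vertically changes its Liouville class and risks making $\jT^+\cap\jT^-$ empty, which is exactly the degenerate behaviour the paper's contradiction argument is built to rule out. You also identify but do not resolve the central difficulty: establishing that the intersection pattern actually yields a two-dimensional splitting \emph{domain} (not merely an arc) on the non-Birkhoff circles, which is where the paper needs the fold/cusp classification from Lemma~\ref{lem:genericprop} and the case analysis on positively/negatively regular homoclinic points. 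As written, the proposal replaces the essential transversality and contradiction machinery by an optimistic explicit construction that would need a great deal of justification that is not supplied, so the proof is incomplete.
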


The proof follows from a sequence of intermediate lemmas. First, by the same argument as in the
proof of the previous lemma, we can assume that
$\jC\subset H\inv(\e)$ admits a section $\Sig$ whose attached return map $\ph$ is a special
twist map. We set
\beq
\jC_*=\jA\cap H\inv(\e),\qquad \jC_*^\pm=j^\pm(\jC_*)
\eeq
so that $\jC\subset\jC_*$ and $\jC^\pm\subset\jC_*^\pm$.

\paraga The first lemma is a direct consequence of the last two conditions of \pomu, of which we keep
the notation.

\begin{lemma}\label{lem:existint}
For each $\jT\in\Tess(\jC)$, $\jT^+\cap\jT^-\neq\emptyset$.
\end{lemma}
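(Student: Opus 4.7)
\medskip

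\noindent\textit{Plan.} My strategy is to work in the chart $\Psi^{\rm sec}$, in which $\jT^+$ and $\jT^-$ become Lipschitz Lagrangian graphs over $\T^2$ in $T^*\T^2$; once I show they share the same cohomology class, their intersection follows from a standard critical-point argument for the difference of their defining $1$-forms.

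First, I would observe that $\jT$ is a Lipschitz Lagrangian torus in $\jA$, as the suspension under $\Phi_H$ of a Birkhoff--Lipschitz essential invariant circle of the twist section. Since $W^\pm(\jA)$ are coisotropic, the characteristic projections $\Pi^\pm$ are symplectic reductions, and their local inverses $j^\pm:\jA\to\jA^\pm\subset\De_\e$ are symplectic local diffeomorphisms. Hence $\jT^\pm=j^\pm(\jT)$ are Lipschitz Lagrangian tori in $\De_\e$, and by the fourth bullet of \pomu, their preimages $\til\jT^\pm:=(\Psi^{\rm sec})\inv(\jT^\pm)$ are Lipschitz Lagrangian graphs in $T^*\T^2$, which I may write as $\til\jT^\pm=\{(\th,\sigma^\pm(\th)):\th\in\T^2\}$ for Lipschitz closed $1$-forms $\sigma^\pm$ on $\T^2$.

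The key step is to establish $[\sigma^+]=[\sigma^-]$ in $H^1(\T^2,\R)$. For any loop $\gamma\subset\jT$, I sweep $\gamma$ along the characteristic leaves of $W^\pm(\jA)$ from $\jA$ to $\jA^\pm$, producing a $2$-cell $S^\pm\subset W^\pm(\jA)$ bounded by $\gamma$ and $\gamma^\pm:=j^\pm(\gamma)$. Since the characteristic direction is $\Om$-null in the coisotropic $W^\pm(\jA)$, one has $\Om|_{S^\pm}=0$, and Stokes' theorem gives $\int_{\gamma^\pm}\lambda=\int_\gamma\lambda$, so in particular the Liouville periods along $\gamma^+$ and $\gamma^-$ coincide. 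Transferring through the conformal exact-symplectic chart $\Psi^{\rm sec}$ (with factor $a>0$) and identifying the Liouville class of a Lipschitz Lagrangian graph with the cohomology class of its defining $1$-form, the resulting period equality translates into $[\sigma^+]=[\sigma^-]$, after checking that the $H_1$-identifications induced by the various Lipschitz-graph projections $\til\jT\to\T^2$, $\til\jT^\pm\to\T^2$ and by the diffeomorphisms $j^\pm$ are consistent.

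With the cohomology equality in hand, $\sigma^+-\sigma^-$ is a closed $1$-form of trivial class, hence exact: $\sigma^+-\sigma^-=dg$ for some $g\in C^{1,1}(\T^2)$ (since $\sigma^\pm$ are Lipschitz). The function $g$ attains its minimum at some $\th_0\in\T^2$, at which $dg(\th_0)=0$, so $\sigma^+(\th_0)=\sigma^-(\th_0)$; this provides a point of $\til\jT^+\cap\til\jT^-$ whose image under $\Psi^{\rm sec}$ lies in $\jT^+\cap\jT^-$. The main obstacle I anticipate is precisely the cohomology equality, where one must verify that the self-map of $\T^2$ induced by composing the two Lipschitz-graph trivializations with $j^\pm$ acts trivially on $H_1$; once this standard consistency is in place, the rest is routine.
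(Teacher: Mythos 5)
Your proof is correct and follows essentially the same route as the paper's: the paper also passes to the charts $\Psi^{\rm ann}$, $\Psi^{\rm sec}$, observes that $\Psi^{\rm sec}(\jT^\pm)$ are Lipschitz Lagrangian graphs of equal cohomology because they are images of the single graph $\Psi^{\rm ann}(\jT)$ under the exact-symplectic maps $\Psi^{\rm sec}\circ j^\pm\circ(\Psi^{\rm ann})^{-1}$, and concludes by the standard nonempty-intersection property of same-class Lagrangian graphs. The only difference is that you additionally supply the Stokes-on-the-coisotropic argument showing why the characteristic transition maps $j^\pm$ preserve Liouville periods, a fact the paper asserts without proof; this is a welcome elaboration, not a divergence in method.
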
 

\begin{proof}
The Lipshitzian Lagrangian tori $\Psi^{sec}(\jT^+)$ and $\Psi^{sec}(\jT^-)$ are graphs over the null section
in $\jO^{sec}$. They moreover have the same
cohomology, since they are images of the same graph $\Psi^{ann}(\jT)$ by the exact-symplectic diffeomorphisms
$$
\Psi^{sec}\circ j^{\pm}\circ(\Psi^{ann})\inv.
$$
Hence their intersection in nonempty.
\end{proof}

\paraga Given a vector field $X$ on $\jC_*$ and a $2$-dimensional submanifold $S\subset \jC_*$,
we define the tangency set
$$
\Tan(X,S)=\big\{x\in S\mid X(x)\in T_xS\big\}.
$$
We say that a point $x\in S\setm \Tan(X,S)$ is regular.
We define the folds and cusps of $\jZ$ relatively to $X$ in the usual way (see \cite{GS}). 

\paraga We fix a compact subset $K\subset\De_\e$ which contains $\jC^+\cup\jC^-$ in its interior,
and we denote by $\H(K)$ the space of pairs of nonautonomous Hamiltonians in 
$C^\ka(\R\times \De_\e)$ with support in $[-2/3,1/3]\times K$. Let $\H_0$ be a ball centered at~$0$ 
in the space $\H(K)$, such that for each $(\ell_+,\ell_-)\in \H_0$, 
the conclusion of Lemma~\ref{lem:perturbpom} holds.
Given $(\ell_+,\ell_-)\in\H(K)$, we denote by $\cH_{(\ell_+,\ell_-)}$ the associated Hamiltonian, as defined
in Lemma~\ref{lem:perturbpom}.  Recall that $\cH_{(\ell_+,\ell_-)}$ coincides with the initial Hamiltonian
in the neighborhood of $\jA$. We can therefore assume that $\H_0$ is small enough so that $\jC$
still satisfies condition \pomu\ for $\cH_{(\ell_+,\ell_-)}$, relatively to the same section $\De$,
with characteristic maps
$$
j_{(\ell_+,\ell_-)}^\pm: \jA\to \jA^\pm(\ell_+,\ell_-)\subset\De.
$$
In order not to overload the notation and get rid of the problem of the boundaries and corners when considering intersections, 
we continue $\jC$ to a slightly larger $2$-dimensional manifold
{\em without boundary} contained in $\jC_*$ and with compact closure, {\em that we still denote by $\jC$}.
We set 
$$
\jC^\pm(\ell_+,\ell_-)=j_{(\ell_+,\ell_-)}^\pm(\jC),\qquad \jC_*^\pm(\ell_+,\ell_-)=j_{(\ell_+,\ell_-)}^\pm(\jC_*).
$$

\begin{lemma}\label{lem:genericprop}
The following properties are satisfied.
\begin{enumerate}
\item The set $\H_1$ of pairs of Hamiltonians  $(\ell_+,\ell_-)\in\H_0$ for which the intersection 
$$
\jC^+_*(\ell_+,\ell_-)\cap\jC^-_*(\ell_+,\ell_-)
$$ 
is transverse  in $\De_\e$ at each point of $\cl(\jC)$ is open and dense in $\H_0$. 
\item  For $(\ell_+,\ell_-)\in\H_1$,  the set
$$
\jI(\ell_+,\ell_-):=\jC^+(\ell_+,\ell_-)\cap\jC^-(\ell_+,\ell_-)
$$
is a two-dimensional submanifold  of $\De_\e$ which contains the set $\jX(\ell_+,\ell_-)$ of
all homoclinic intersections $j^+(\ell_+,\ell_-)(\jT)\cap j^+(\ell_+,\ell_-)(\jT)$ for $\jT\in\Tess(\jC)$.
\item We get rid of the indexation by $(\ell_+,\ell_-)$ when obvious. 
The set $\H_2$ of pairs of Hamiltonians  in $\H_1$ for which the subsets 
$$
\Big\{x\in \jI\mid X^-(x)\in T_xW^+(\jC)\Big\},\qquad
\Big\{x\in \jI\mid X^+(x)\in T_xW^-(\jC)\Big\},
$$
are 1-dimensional submanifolds of $\jI_*$, is open dense in $\H_0$. 
\item  The set $\H_3$ of pairs of Hamiltonians  in
$\H_2$ for which the tangency sets 
$$
\jZ^\pm=\Tan\big(X_\cH,\Pi^\pm(\jI_*)\big)
$$
are $1$-dimensional submanifolds of $\Pi^\pm(\jI_*)$, is open and dense in~$\H_0$.
\item The set $\H_4$ of pairs of Hamiltonians  in  $\H_3$ for which 
\beq
\jX \cap  (\Pi^+)\inv(\jZ^+)\cap (\Pi^-)\inv(\jZ^-)=\emptyset
\eeq
is open and dense in $\H_0$.
\item The set $\H_5$ of pairs of Hamiltonians  in  $\H_4$ for which each point 
of $\Pi^+\big(\jX\big)\cap\jZ^+$ and $\Pi^\pm\big(\jX\big)\cap\jZ^-$
is either regular or located on a fold, is open and dense set in $\H_0$.
\end{enumerate}
\end{lemma}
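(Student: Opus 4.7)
The plan is to treat all six items as consequences of parametric transversality (\emph{à la} Thom), with the critical preparation being to establish that the perturbation space $\H_0$ provides independent and sufficiently rich control over the two characteristic embeddings $j^+$ and $j^-$. I would first observe that, by Lemma~\ref{lem:perturbpom} applied separately on the stable and unstable sides, a perturbation $\ell_+$ (resp. $\ell_-$) supported in a small box-section adjacent to $\jC^+$ (resp. $\jC^-$) in $\De$ generates an ambient Hamiltonian diffeomorphism $\bphi_+$ (resp. $\bphi_-$) of $\De_\e$, and the perturbed characteristic maps factor as $j^\pm_{(\ell_+,\ell_-)} = \bphi_\pm \circ j^\pm_H$. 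Because $\jC^+$ and $\jC^-$ are disjoint submanifolds of $\De$ (in the \pom\ setting, $j^+$ and $j^-$ are sections of the same $\jA$ but their images lie on opposite branches of the local invariant manifolds), one may pick $K$ so that the supports of the two perturbations are disjoint. Thus the assignment $(\ell_+,\ell_-)\mapsto(\bphi_+,\bphi_-)$ is \emph{surjective onto a neighborhood of $(\Id,\Id)$} in the product of Hamiltonian diffeomorphism groups of $\De_\e$, and this is the key flexibility that drives the rest of the proof.

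For item (1), the transverse intersection of the two $4$-dimensional submanifolds $\jC^+_*(\ell_+,\ell_-)$ and $\jC^-_*(\ell_+,\ell_-)$ in the $6$-dimensional manifold $\De_\e$ is the standard generic position; openness follows from the $C^1$-stability of transversality at compact subsets, and density is an application of Thom's parametric transversality theorem applied to the smooth family $\bphi_+\mapsto \bphi_+\circ j_H^+(\jC_*)$ with $\jC_*^-(\ell_+^0,\ell_-^0)$ held fixed. Item (2) is then immediate: the dimension count gives $\dim\jI=2$, and the inclusion of $\jX$ into $\jI$ is exactly the content of Lemma~\ref{lem:existint} applied torus-by-torus (the intersection of the Lipschitz graphs $\Psi^{\rm sec}(\jT^\pm)$ lies inside $\Psi^{\rm sec}(\jI)$ when the intersections are transverse).

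For items (3)--(6), each condition is reformulated as a transversality or generic singularity condition on a jet of the perturbed data. In item (3), the tangency conditions $X_\cH^\mp(x)\in T_xW^\pm(\jC)$ cut out codimension-one subsets of the $2$-dimensional manifold $\jI$; genericity of such a condition reduces to the fact that the map $x\mapsto$ (angle between $X_\cH(x)$ and the relevant $4$-plane) has $0$ as a regular value after a generic perturbation of $(\bphi_+,\bphi_-)$. In item (4), the tangency of $X_\cH$ with $\Pi^\pm(\jI_*)$ is again a codimension-one condition in $2$-dimensional surfaces, handled analogously; here we exploit that the perturbation moves the projections $\Pi^\pm(\jI)$ inside $\jC$ while leaving $X_\cH$ essentially unchanged on $\jC$. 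Items (5) and (6) are codimension-one/codimension-two avoidance and classification conditions: the finite set $\Pi^\pm(\jX)\cap\jZ^\pm$ can be generically disjoined from the preimages by dimensional count, and the fold/cusp classification at the remaining tangencies is the classical generic structure of Whitney--Thom singularities for maps of surfaces (see \cite{GS}), which holds on an open dense subset of the parameter space.

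The main obstacle is the first step: verifying rigorously that the map $(\ell_+,\ell_-)\mapsto(\bphi_+,\bphi_-)$ has dense image (in the appropriate $C^{\ka-2}$ topology) within a neighborhood of the identity in the product group of Hamiltonian diffeomorphisms of $\De_\e$ with supports in $K$. This requires keeping careful track of the symplectic structure on $\De_\e$ induced by the ambient $\Om$ and checking that the auxiliary Hamiltonians $\ell_\pm$ can indeed generate any prescribed compactly supported Hamiltonian flow on $\De_\e$—a routine but delicate extension of the argument in Lemma~\ref{lem:perturblemma}, which I would prove by combining a standard partition-of-unity construction on the box-section with the Hamiltonian time-reparameterization trick used in the proof of that lemma. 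Once this surjectivity is in hand, items (1)--(6) follow from Thom transversality, the Baire property of $\H_0$, and the fact that finite intersections of open dense subsets remain open dense.
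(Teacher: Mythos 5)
Your strategy — parametric transversality driven by independent control of the two characteristic embeddings — is the same as the paper's, which invokes Abraham's transversality theorem and constructs explicit evaluation maps $\ev_\chi\big((\ell_+,\ell_-),x,y\big)=\big(j^+_{(\ell_+,\ell_-)}(x),j^-_{(\ell_+,\ell_-)}(y)\big)$ transverse to suitable diagonals; so the approach matches in substance. However, there is a concrete error in your preparatory step, and a point where you over-engineer what is actually needed.

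The error: you assert that ``$\jC^+$ and $\jC^-$ are disjoint submanifolds of $\De$.'' They are not. They are traces, in the common section $\De_\e$, of the stable and unstable manifolds of the same cylinder, and their intersection $\jI=\jC^+\cap\jC^-$ is precisely the homoclinic intersection set that items (1)--(6) are about — if the traces were disjoint the lemma would be vacuous. The genuine reason one can perturb $j^+$ and $j^-$ independently is a statement about the \emph{flow-boxes} in $\A^3$, not about the slices in $\De$: the box used to implement $\ell_-$ (affecting the backward transition of $W^-$ onto $\jA$) and the box used to implement $\ell_+$ (affecting the forward transition of $W^+$) lie on opposite sides of $\De$ in the flow direction of $X_H$, hence are disjoint open sets of the energy hypersurface even though their intersections with $\De_\e$ overlap. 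That is the decoupling the paper exploits via two applications of Lemma~\ref{lem:perturbpom}. Your parenthetical remark about ``opposite branches'' gestures at this but does not fix the false disjointness claim.

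The over-engineering: you promote the key step to ``surjectivity (or density) of $(\ell_+,\ell_-)\mapsto(\bphi_+,\bphi_-)$ onto a neighborhood of $\Id$ in the product of Hamiltonian diffeomorphism groups,'' and flag it as the main obstacle requiring a delicate extension of Lemma~\ref{lem:perturblemma}. The paper does not prove, and does not need, any such surjectivity statement. Abraham's theorem only requires that the \emph{derivative} of the evaluation map be submersive onto a complement of the tangent space of the target stratum at the finitely many relevant points; concretely, given $(x,y)$ with $j^+(x)=j^-(y)$ and a vector $u$ tangent to $\De_\e$ at that point, one just needs a one-parameter family $(\ell_+,\ell_-)(s)$ with $\tfrac{d}{ds}\ev_\chi|_{s=0}=(0,u)$, which is produced by a single bump function. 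Proving the global surjectivity onto a neighborhood of $\Id$ is both harder and unnecessary; if you want to keep that framing you should at least note it follows from Lemma~\ref{lem:perturbpom} rather than requiring a new argument. Aside from these two points, items (1)--(2) are handled correctly, and your treatment of (3)--(6) as codimension/singularity conditions is the right idea although considerably vaguer than the paper's explicit construction of auxiliary maps $\chi:\H_0\to C^\ka(\jC_*\times\jC_*,\De_\e\times\De_\e\times\R^k)$ for each item.
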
 

\begin{proof} The proofs use only very classical methods of singularity theory, we will give the main ideas
and refer to \cite{LM} for more details.
We first recall the following classical genericity result by Abraham.
\vskip2mm\noindent
 {\bf Theorem (\cite{AR}).} {\em Fix $1\leq k<+\infty$. Let $\cL$ be a $C^k$ and second-countable 
 Banach manifold.
Let  $X$ and $Y$ be finite dimensional $C^k$ manifolds. 
Let $\chi : \cL\to C^k(X,Y)$ be a map such that the
associated evaluation 
$$
\ev_\chi: (\cL\times X)\to Y,\qquad \ev_\chi(\ell,x)=\big(\chi(\ell)\big)(x)
$$
is $C^k$ for the natural structures.
Fix a be a submanifold $D$ of $Y$ such that
$$
k>\dim X-\codim  D
$$
and assume that $\ev_\chi$ is transverse to $D$. 
Then the set of all $\ell\in\cL$ such that $\chi(\ell)$ is transverse to $D$
is residual in $\cL$, and open and dense when $D$ is compact.}

\vskip2mm $\bu$
To prove 1, we consider the following map
\beq
\left\vert
\begin{array}{rrl}
\chi:\H_0&\to &C^\ka(\jC_*\times \jC_*, \De_\e\times\De_\e)\\[4pt]
(\ell_+,\ell_-)\ \ &\mapsto& \Big[(x,y)\mapsto \big(j^+_{(\ell_+,\ell_-)}(x),j^-_{(\ell_+,\ell_-)}(y)\big)\Big]\\
\end{array}
\right.
\eeq
whose evaluation $\ev_\chi$ map is $C^{\ka-2}$. Introduce the diagonal 
\beq
D=\big\{(z,z)\mid z\in \De_\e\big\}\subset \De_\e\times\De_\e.
\eeq
Given $(\ell^0_+,\ell^0_-)\in\H_0$, a point $(x,y)\in \jC_*\times \jC_*$ such that 
$\ev_\chi\big((\ell^0_+,\ell^0_-),x,y\big)\in D$ and a vector $u\in T_y\De_\e$, 
one readily checks the existence of a path $s\mapsto (\ell_+,\ell_-)(s)$ such that 
\beq
(\ell_+,\ell_-)(s)=(\ell^0_+,\ell^0_-),\qquad \frac{d}{ds}\ev_\chi(\ell(s),x,y)_{\vert s=0}=(0,u),
\eeq
which proves that $\ev_\chi$ is transverse to $D$ at $(\ell,x,y)$. For $\ka$ large enough, 
the Abraham genericity theorem 
proves our first claim, while 
the second one is an immediate consequence of Lemma~\ref{lem:existint}.

\vskip2mm $\bu$
To prove 3, we endow $\A^3$ with a trivial Riemannian metric, and we denote by $\cdot$
the associated scalar product, which is therefore defined for pairs of vectors possibly
tangent at different points.
Given $x\in \jA^\pm$, we denote by $N^\pm(x)$ the (suitably oriented) unit normal vectors at $x$  to
the invariant manifolds $W^\pm(\jA)$. We then introduce the map (where we get rid of the obvious
 indexation of $j^\pm$ by $(\ell_+,\ell_-)$):
\beq
\left\vert
\begin{array}{rrl}
\chi:\H_0&\to &C^\ka(\jC_*\times \jC_*, \De_\e\times\De_\e\times\R)\\[4pt]
(\ell_+,\ell_-)\ \ &\mapsto& 
\Big[(x,y)\mapsto \Big(j^+(x),j^-(y),X^-\big(j^-(x)\big)\cdot N^+\big(j^+(y)\big)\Big)\Big]\\
\end{array}
\right.
\eeq
whose evaluation $\ev_\chi$ map is $C^{\ka-2}$. One then proves by straightforward constructions
that $\ev_\chi$ is tranvsverse to the diagonal 
$$
D=\{(\xi,\xi,0)\mid \xi\in\De_\e\}.
$$
As a consequence of the Abraham theorem, for an open dense subset of pairs $(\ell_+,\ell_-)$,
$\ha\chi:=\chi(\ell_+,\ell_-)$ is transverse to $D$. For such pairs,
$
\ha\chi\inv(D)
$
is a codimension $5$ submanifold of $\jC_*\times\jC_*$, and so is $1$-dimensional. The
projection of this manifold on the first factor is the set of points.

\vskip2mm $\bu$ The proof of 4 is analogous. We denote by $Y^\pm$ the direct image of
the vector field $X_H$ restricted to $\jA$ by the transition maps $j^\pm$, which is therefore
a vector field on $\jA^\pm$. We introduce the projection operator $P$ on the normal bundle
of the intersection $\jI$, that we see as a submanifold of the trivial bundle $T\A^3$. 
To deal with $\jZ^-$, we introduce the following map
\beq
\left\vert
\begin{array}{rrl}
\chi:\H_0&\to &C^\ka(\jC_*\times \jC_*, \De_\e\times\De_\e\times\R)\\[4pt]
(\ell_+,\ell_-)\ \ &\mapsto& 
\Big[(x,y)\mapsto \Big(j^+(x),j^-(y),\norm{P_{j^+(x)}(Y^-(y))}^2\Big)\Big]\\
\end{array}
\right.
\eeq
which yields the same result as above.

\vskip2mm $\bu$ To prove 5 we begin by proving that the set of pairs $(\ell_+,\ell_-)\in\H_4$ 
for which 
$$
\jX \cap  (\Pi^+)\inv(\jZ^+)\cap (\Pi^-)\inv(\jZ^-)
$$
is finite is open and dense in $\H_0$. For this we introduce the map
$$
\chi:\H_0\to C^\ka(\jC_*\times \jC_*, \De_\e\times\De_\e\times\R\times\R)
$$
such that
$$
\big(\chi(\ell_+,\ell_-)\big)(x,y)=
\Big(
j^+(x),
j^-(y),
Y^-\big(j^-(x)\big)\cdot N^+\big(j^+(y)\big),
Y^+\big(j^+(x)\big)\cdot N^-\big(j^-(y)\big)
\Big)
$$
which is also easily proved to be transverse to the diagonal
$$
D=\big\{(\xi,\xi,0,0)\mid \xi\in\De_\e\big\}.
$$
So the set of pairs for which $\ha\chi\inv(D)$ is a dimension 0 manifold is open and dense,
which proves our claim. It only remains to check that one can make an additional perturbation
which disconnects $\jX$ from this last set, which is easy.

\vskip2mm $\bu$ The proof of 6 is analogous to the previous one, since the set of cusp
points is finite.
\end{proof}

\paraga The following lemma immediately yields Lemma~\ref{lem:goodcylpom}. Recall that
$\jC$ admits a section $\Sig$ for which the return map $\ph$ is a special twist map.

\begin{lemma}
Given $(\ell^0_+,\ell^0_-)\in\H_5$, with associated Hamiltonian $\cH^0$, 
there exists $\cH$ in $C^\ka(\A^3)$, arbitrarily close to $\cH^0$ in the $C^\ka$ topology and
which coincides with $\cH^0$ in the neighborhood of $\jA$, for which:
\begin{itemize}
\item $\jC$ satisfies condition \pomu\ and admits a homoclinic correspondence $\psi$;
\item for any essential circle $\Ga\in\Ess(\ph)$ there exists a splitting arc based on $\Ga$
for $(\ph,\psi)$; 
\item if moreover $\Ga$ is the lower boundary of a Birkhoff zone, there exists
a simple arc based of $\Ga$ for $(\ph,\psi)$.
\end{itemize}
\end{lemma}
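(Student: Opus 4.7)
My plan is to read off the homoclinic correspondence directly from the transverse geometric data provided by membership in $\H_5$, and then to perform one last $C^\ka$-small perturbation, again supported in $\De_\e$ and leaving $\jA$ untouched, that arranges all required splitting arcs simultaneously. The construction uses Lemma~\ref{lem:perturbpom} once more, together with a Sard-type transversality argument parallel to the proof of Lemma~\ref{lem:genericprop}.

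For the correspondence, set $\jI=\jC^+\cap\jC^-$ and let $\jI^\circ=\jI\setm\big((\Pi^+)^{-1}(\jZ^+)\cup(\Pi^-)^{-1}(\jZ^-)\big)$, which is open and dense in $\jI$ by the conditions defining $\H_3$--$\H_5$. On each connected component $U\subset\jI^\circ$ both $\Pi^+$ and $\Pi^-$ restrict to diffeomorphisms onto their images, and $\psi_U=P^+\circ\Pi^+\circ(\Pi^-)^{-1}\circ(P^-)^{-1}$ is a well-defined local Hamiltonian diffeomorphism between open subsets of $\Sig$. Indexing these components by $I$, I obtain the homoclinic correspondence $\psi=(\psi_U)_{U\in I}$, with associated family $S_U=\Pi^+\circ(\Pi^-)^{-1}$ on $\Dom S_U=\Pi^-(U)\subset\jC$. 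The transverse homoclinic condition (\ref{eq:redhom3}) holds on the open full-measure subset of $\Dom S_U$ that is the $\Pi^-$-image of the genuine transverse intersection points of $W^-(\jA)$ and $W^+(\jA)$ in $U$, which is dense by~$\H_1$.

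For splitting arcs, fix $\Ga\in\Ess(\ph)$ and let $\jT\in\Tess(\jC)$ be the essential torus above~$\Ga$. By Lemma~\ref{lem:existint}, the cohomologous Lipschitz Lagrangian graphs $\Psi^{\rm sec}(\jT^\pm)$ meet in a nonempty compact set, which under the transversality of $\H_5$ is a transverse $1$-submanifold of $\jI$. Its image under $(P^\pm)^{-1}\circ\Pi^\pm$ is a finite union of closed curves in $\Sig$, one of which, viewed as $\psi_U^{-1}(\Ga)$ for an appropriate index $U$, meets $\Ga$ without being equal to it. A final $C^{\ka-2}$-small perturbation provided by Lemma~\ref{lem:perturbpom}, with Hamiltonian diffeomorphism of $\De_\e$ concentrated in a neighborhood of such a meeting point, makes the meeting transverse at some $\al\in\Ga$; the branch of $\psi_U^{-1}(\Ga)$ emanating from $\al$ into $\Ga^-$ is the desired splitting arc $\ze$, and a small thickening of $\ze$ in $\Ga^-$ with $\ze$ in its boundary is a splitting domain. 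When $\Ga$ is the upper boundary of a Birkhoff zone I moreover localise this perturbation inside a chart of $\De_\e$ whose angular projection has length $<\tfrac12$, so that the resulting $\ze$ projects onto an interval of length $<\tfrac12$ in~$\T$ and is a simple splitting arc.

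The main obstacle is that $\Ess(\ph)$ is in general an uncountable, Cantor-like family of circles, so one cannot treat each $\Ga$ by an independent perturbation. The key point is that the obstruction to splitting along $\Ga$ is the constancy of the relative action $\sM_\jT:\Psi^{\rm ann}(\jT)\to\R$ defined as the difference of primitives of the exact Lagrangian graphs $\Psi^{\rm sec}(\jT^\pm)$ pulled back via $\Psi^{\rm ann}\circ(j^\pm)^{-1}$: once $\sM_\jT$ is nonconstant, the crossing argument of the previous paragraph applies on $\Ga$. A single generic $C^\ka$-small perturbation of $j^-$ via Lemma~\ref{lem:perturbpom} forces nonconstancy of $\sM_\jT$ for every $\jT\in\Tess(\jC)$ simultaneously, by a Sard-type transversality argument in the same spirit as the proof of Lemma~\ref{lem:genericprop}: the bad locus in parameter space has positive codimension in $\H_5$ uniformly in $\jT$, and its complement is open and dense. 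Combining this uniform nonconstancy with the geometric data guaranteed by $\H_5$ yields the Hamiltonian $\cH$ close to $\cH^0$ satisfying the three required conclusions, and hence, through Lemma~\ref{lem:goodcylpom}, finishes the proof.
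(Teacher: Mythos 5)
Your approach is genuinely different from the paper's, and it contains two serious gaps.

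The paper does \emph{not} make any further perturbation in this lemma: it proves that the genericity conditions $\H_1$--$\H_5$ already force the existence of a splitting arc based on every $\Ga\in\Ess(\ph^0)$ for the Hamiltonian $\cH^0$ itself. The mechanism is a topological contradiction. Given $\Ga$ with essential torus $\jT$, the compact set $I=\jT^+\cap\jT^-$ is nonempty by the cohomological argument of Lemma~\ref{lem:existint}. Condition $\H_5$ (through properties 4--6 of Lemma~\ref{lem:genericprop}) guarantees that for every $\xi\in I$ at least one of $\Pi^\pm(\xi)$ is a regular point of $\Pi^\pm(\jI)$ with respect to $X_H$, and the other is regular or at worst on a fold. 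Using this one builds a finite cover of $I$ by neighborhoods $V(\xi_i)$ in which the local geometry of $\jT^\pm$ relative to $\Ga$ is classified as one-sided positive, one-sided negative, or two-sided. If the intersection were everywhere one-sided with consistent sign, a $C^\ka$-small perturbation through Lemma~\ref{lem:perturbpom} would push $\jT^-$ entirely off $\jT^+$, contradicting Lemma~\ref{lem:existint}. Hence some $\xi\in I$ is two-sided or has the ``wrong'' sign, and that point produces a splitting arc (indeed a splitting domain, using the fold structure). This argument works simultaneously for all $\Ga$, since the per-$\Ga$ statement is derived from $\H_5$ rather than imposed by a further perturbation.

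Your proposal, by contrast, tries to \emph{make} a final perturbation, and this is where the gaps are. First, you assert that under $\H_5$ the set $\jT^+\cap\jT^-$ is ``a transverse $1$-submanifold of $\jI$'' whose projection to $\Sig$ is a ``finite union of closed curves.'' This is not justified: $\jT^\pm$ are only Lipschitz graphs (images of Lipschitz essential circles), so their intersection need not be a manifold, and $\H_5$ concerns tangency sets of the smooth objects $\Pi^\pm(\jI)$ with $X_H$, not transversality of $\jT^+$ and $\jT^-$ with each other; the paper deliberately avoids claiming any regularity for $I$. Second, and more seriously, your plan to handle the uncountable family $\Ess(\ph)$ by a single ``Sard-type'' perturbation making a relative-action function $\sM_\jT$ nonconstant for every $\jT$ simultaneously is a genuine gap. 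The Abraham/Sard machinery you invoke in the spirit of Lemma~\ref{lem:genericprop} gives residual (or open-dense) sets in parameter space \emph{for each fixed} compact constraint; there is no general way to intersect uncountably many such conditions, and you give no argument for why the ``bad locus'' has positive codimension uniformly over all $\jT$. The paper's contradiction argument circumvents precisely this obstruction: it never needs to control all tori by a single generic choice, because the forced cohomological intersection plus the fold structure of $\H_5$ already pins down a splitting point on each torus without any further perturbation. (The paper does use one additional perturbation, but only for the countable family of Birkhoff-zone boundaries, which is handled in the following, separate lemma.)
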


\begin{proof} 
We will first prove that for any $\Ga\in\Ess(\ph^0)$ there exists a splitting
arc based on $\Ga$ for $(\ph^0,\psi^0)$. Let $\jT\subset\jC$ be the torus generated by 
$\Ga$ under the action of the Hamiltonian flow. The complement $\jC\setm\jT$ admits
two connected components, which we denote by $\jC_\bu$ and $\jC^\bu$ according
to the orientation of $\Sig$ induced by its parametrization.

\vskip2mm $\bu$ Set $I=\jT^+\cap \jT^-$, so that $I$ is compact and contained in $\jX$.
By property 5) of Lemma~\ref{lem:genericprop}, for each point $\xi\in I$, one of the points
$\xi^\pm=\Pi^\pm(\xi)$ is regular with respect to $X_H$, in the sense that $X_H$ is not
tangent to $\Pi^\pm(\jI)$ at $\xi^\pm$. We say that $\xi$ is positively (resp. negatively) regular
when $\xi^+$ (resp. $\xi^-$) is regular.

We define the point $x^+$ as the first intersection
of the positive orbit of $\xi^+$ with $\Sig$, and the point $x^-$ as the first intersection of
the negative orbit of $\xi^-$ with $\Sig$. We adopt the same convention for the transport
to $\Sig$ for all points in small enough neighborhoods of the previous two ones.

\vskip2mm $\bu$   Assume for instance that $\xi$ is positively regular. 
Then there exists a neighborhood $V(\xi)$ of 
$\xi$ in $\jI$ such that the previous transport by the Hamiltonian flow in $\jC$ induces an embedding of 
the image $\Pi^\pm(V(\xi))$ in the section $\Sig$, whose image $V^+$ is a neighborhood of  
the point $x^+$.  Hence one can moreover assume $V(\xi)$ small enough so that the complement 
$\ha\Ga=V^+\setm\Ga$ admits exactly two connected components, contained in $\jC^\bu$
and $\jC_\bu$. Inverse transport of $\ha\Ga$ by the Hamiltonian flow and then by $j^+$ yields 
a Lipshitzian curve in $V(\xi)$, which disconnects $V(\xi)$. We denote by $V^\bu(\xi)$ and 
$V_\bu(\xi)$ the two components of its complement, according to the intersections
of their images with $\jC_\bu$ and $\jC^\bu$. We say that $\jC^\bu$ is the positive component
and that $\jC_\bu$ is the negative component.

\vskip2mm $\bu$  Then, the point $\xi^-$ is either regular or located on a fold of $\Pi^-(\jI)$ 
relatively to $X_H$. One can reduce $V(\xi)$ in order that the intersection $\Pi^-(V(\xi))\cap \jT$
is connected, and therefore arc-connected. We will assume this condition satsified in the following.
This yields by inverse transport an arc-connected subset $\sig=j^-(\sig^-)\subset V(\xi)$. 
We say that the intersection $I$ is {\em one-sided in $V(\xi)$} if either
$
\sig\cap V_\bu(\xi)=\emptyset
$
or 
$
\sig\cap V^\bu(\xi)=\emptyset
$
is empty, we say that is is positive in the latter case and negative in the former.

\vskip2mm $\bu$ In the case where $\xi$ is negatively regular, define the neighborhood $V(\xi)$ in a symmetric way,
with the analogues for the notion of one-sided, positive and negative intersections in $V(\xi)$. When both $\xi^+$
and $\xi^-$ are regular, we arbitrarily choose one of them to perform the previous construction
and define the notion of one-sided intersection.

\vskip2mm $\bu$ One then gets a covering of $I$ by a finite number of neighborhoods $V(\xi_1),\ldots,V(\xi_\ell)$
with the previous properties. Arguing by contraction and assuming that each positively regular point $\xi_i$
yields a positive intersection and each negatively regular point yields a negative intersection, one proves that
it would be possible to construct a pair of arbitrarily close to the identity Hamiltonian diffeomorphisms $\phi^\pm$ of $\De_\e$
to which Lemma~\ref{lem:perturbpom} applies, and which yield a $C^\ka$ perturbation $\cH$ of $H$ which 
still admits $\jC$ as a cylinder satisfying \pomu, and $\jT$ as an invariant torus, but for which $\jT^+\cap\jT^-$ is
empty, which is impossible.

\vskip2mm $\bu$ Therefore there exists in $I$ either a positively regular point with two-sided or negative
intersection, or a negatively regular point with two-sided or positive intersection. Both cases yield a splitting
arc based on $\Ga$ for the (local) composition $P^+\circ\Pi^+\circ j-\circ(P^-)\inv$. Now we know that 
if for instance $\xi$ is positively regular, then $\xi^-$ is either regular or located on a fold. This immediately
yields a splitting domain for the homoclinic correpondence and concludes the proof.
\end{proof}

\paraga It only remains now to consider the case of lower boundaries of Birkhoff zones.

\begin{lemma}
If $\Ga$ is the lower boundary of a Birkhoff zone, there exists a splitting arc based on $\Ga$.
\end{lemma}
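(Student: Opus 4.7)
The plan is to realize the splitting arc at $\Ga$ as a Hausdorff limit of splitting arcs based at nearby essential invariant circles of $\ph$. The special twist map hypothesis on $\ph$ ensures that, unless $\Ga$ is simultaneously the upper boundary of an adjacent lower Birkhoff zone, $\Ga$ is accumulated from below by a sequence $(\Ga_n)\subset\Ess(\ph)$; in the remaining sandwich case one instead works directly with the heteroclinic orbits inside the lower zone supplied by Birkhoff--Mather theory, using the abundant transverse homoclinic intersections of the torus $\jT$ generated by $\Ga$ ensured by the generic conditions. I will describe the accumulation case, which is the main one.

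Applying the previous lemma to each $\Ga_n$ produces a splitting arc $\ze_n$ based at some $\al_n\in\Ga_n$, with $\ze_n(]0,1])\subset \Dom\psi_{i_n}\cap\Ga_n^-$ and $\psi_{i_n}(\ze_n(]0,1]))\subset\Ga_n$ for some $i_n\in I$. Since on any fixed compact neighborhood of $\Ga$ in $\bA$ only finitely many branches of the homoclinic correspondence contribute, a subsequence has $i_n=i$ constant. After further extractions $\al_n\to\al\in\Ga$, and the images $\ha\ze_n$ converge in the Hausdorff topology to a compact connected set $Z\subset\bA$ containing $\al$, from which a standard compactness argument extracts an arc $\ze$ based at $\al$. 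Continuity of $\psi_i$ and openness of $\Dom\psi_i$ yield $\ze(]0,1])\subset\Dom\psi_i$ and $\psi_i(\ze(]0,1]))\subset\Ga$, and since the $\Ga_n$ are Lipschitz graphs lying strictly below $\Ga$, the tail $\ze(]0,1])$ lies in $\Ga^-\cup\Ga$; after a truncation and reparametrization if needed, the tail lies strictly in $\Ga^-$, so $\ze$ is the required splitting arc.

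The main obstacle is preventing the limit arc from collapsing to the base point $\al$. For this I would establish a uniform lower bound on the diameter of the $\ze_n$ as $\Ga_n\to\Ga$, derived from the generic transversality conditions of Lemma~\ref{lem:genericprop}: the transverse intersection of $\jC_*^+$ and $\jC_*^-$ on a neighborhood of $\ov{\jC}$ in $\De_\e$ with locally uniform angles, together with the one-dimensionality of the tangency sets $\jZ^\pm$ and the isolated nature of cusps, all persist uniformly under small variations of the underlying invariant torus. These properties propagate to a uniform lower bound on the size of the homoclinic intersection $\jT_n^+\cap\jT_n^-$ in a neighborhood of $\jT$, which transfers through Poincar\'e transport to the desired uniform lower bound on the length of $\ze_n$, ensuring that the extracted limit arc is nondegenerate.
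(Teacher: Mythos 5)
Your proposal misreads what the lemma is actually required to deliver, and the Hausdorff--limit strategy does not address it. The preceding lemma already establishes that \emph{every} $\Ga\in\Ess(\ph)$ admits a splitting arc (indeed a splitting domain) based on it; if the present lemma merely produced another splitting arc, it would be vacuous. What is really needed here --- as dictated by the definition of a good cylinder in \S\ref{sec:goodcylchains} and by the third bullet of the preceding lemma --- is a \emph{simple} splitting arc, i.e.\ one whose image projects monotonically onto a one-sided $\th$-interval $[\th,\th+\sig]$ or $[\th-\sig,\th]$. The word ``simple'' is evidently omitted from the statement, but the paper's proof is unambiguous on this point: it perturbs the Hamiltonian (via Lemma~\ref{lem:perturbpom}) so that the base point $\al$ becomes a point of differentiability of the arc with a non-vertical tangent, and a point of differentiability of $\Ga$ with a different tangent; this transversal, non-vertical contact is precisely what forces the arc to be simple in a neighborhood of $\al$. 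The countability observation is what allows one to handle all boundaries of Birkhoff zones at once with a single perturbation.

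Your Hausdorff-limit construction has no mechanism to ensure simplicity: even granting a uniform lower diameter bound and the extraction of a genuine limit arc, the limit of the $\ze_n$ can spiral, oscillate, or otherwise fail to project injectively to a $\th$-interval near $\al$ --- nothing in Lemma~\ref{lem:genericprop} controls the local shape of a splitting arc against its base circle. So the argument, at best, reproves what the previous lemma already gave. There are secondary problems as well: the accumulation-from-below hypothesis is not automatic for a boundary of a Birkhoff zone even for a special twist map (the case you defer to ``Birkhoff--Mather theory'' is in fact the generic situation and is left unaddressed); a Hausdorff limit of arcs need not be an arc; and the limit arc's tail could a priori lie entirely on $\Ga$, which no ``truncation and reparametrization'' can repair. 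The correct mechanism, as in the paper, is not compactness but a targeted Hamiltonian perturbation of the characteristic projections which regularizes the contact of the arc with $\Ga$ at the base point.
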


\begin{proof}
The main observation is that the set of essential circles which are lower boundaries of Birkhoff zones
in countable. Therefor it is enough to prove the property for one such circle. But this is an immediate
perturbation of the previous lemma, by construction a pair of Hamiltonian diffeomorphisms which
makes the intersection point of the previous splitting arc be a derivability point for the arc, with a 
non-vertical tangent, and a derivability point for the circle, with a tangent distinct from the tangent
to the arc.
\end{proof}


\subsection{From admissible chains to good chains: proof of Theorem~\ref{thm:main2}}\label{ssec:proofmain2}
Theorem~\ref{thm:main2} is now a direct consequence of the previous two sections.
Consider an admissible chain $(\jC_k)_{1\leq k\leq k_*}$ with $\de$-bounded homoclinic correspondence
at energy $\e$ for $H$.  Then by Lemma~\ref{lem:goodcylfom} used
recursively there exists a Hamiltonian $\cH^0$ arbitrarily $C^\ka$-close to $H$ such that each 
cylinder satisfying \fomu\ is a good cylinder, and such that each gluing boundary still admits transverse
homoclinic intersections. By Lemma~\ref{lem:goodcylpom} used recursively, one can then find $\cH^1$
arbitrarily $C^\ka$-close to $\cH^0$ such that each cylinder \pomu\  is a good cylinder, without altering
the heteroclinic condition \pomd\ or the gluing condition \glu. 

The only remaining point to prove is that one perturb the system in such a way that each bifurcation pair 
$(\jC_k,\jC_{k+1})$ admits a bifurcation map in the sense of Section~\ref{sec:goodcylchains}, paragraph 6.
We keep the assumptions and notation of \cite{Mar}, Part I, Section 2.
We can first make family of small $C^\ka$ perturbations to the Hamiltonian $H_\eps=h+\eps f$, parametrized by $\eps$, 
to make it $C^\infty$ in the neighborhood of $\T^3\times\{b\}$. 
We so construct a family of $C^\infty$ Hamiltonians $\cH_\eps$ such that $\norm{\til H_\eps-H_\eps}_{C^p}\leq \al\eps$, 
which we still write
$$
\til H_\eps=\til h_\eps+\eps\til f_\eps=h+\eps \ha f_\eps,
$$
so that $\ha f_\eps$ is $C^p$ $\al$-close to $f$. We then perturb $\til f_\eps$ in order that its bifurcation point admits a 
frequency vector $\om$ which is $2$-Diophantine, that is, in adapted coordinates: 
$\om=(\ha\om,0)$ with $\ha\om$ Diophantine. One then use the $\eps$-dependent normal form of $\H_\eps$
introduced in \cite{Mar}, Appendix 3. Given two constants $d>0$ and $\de<1$ with $1-\de>d$,
then,  there is an $\eps_0>0$ such that for $0<\eps<\eps_0$, there exists an analytic symplectic embedding 
$$
\Phi_\eps: \T^3\times B(b_1,\eps^d)\to \T^3\times B(b_1,2\eps^d)
$$
such that 
$$
\til N_\eps(\th,r)=\til H_\eps\circ\Phi_\eps(\th,r)=\til h_\eps(r)+ g_\eps(\th_3,r)+R_\eps(\th,r),
$$
where $g_\eps$ and $R_\eps$ are $C^p$ functions and
\beq
\norm{R_\eps}_{C^p\big( \T^n\times B(b_1\eps^d)\big)}\leq \eps^3.
\eeq
Moreover, $\Phi_\eps$ is close to the identity, in the sense that
\beq
\norm{\Phi_\eps-\Id}_{C^p\big( \T^3\times B(b_1,\eps^d)\big)}\leq \eps^{1-\de}.
\eeq
The final perturbation of the initial Hamiltonian (localized in the neighborhood of the bifurcation locus)
 will be the truncation
$$
H_\eps^0=(\til h_\eps+g_\eps)\circ\Phi_\eps\inv=h+\eps f^0_\eps
$$
which is $\al$ close to $H_\eps$ when $\eps$ is small enough. One  checks that $H_\eps^0$
satisfies the condition of a good chain for the cylinders at their bifurcation point and that 
$f^0_\eps$ is $\al$ close to $f$ is $\eps$ is small enough. This concludes our proof.


\subsection{Proof of Theorem~\ref{thm:main1}}\label{ssec:proofmain1}
Fix $\e>\Min h$ together with a finite family of arbitrary open sets $O_1,\ldots,O_m$
which intersect $h\inv(\e)$. By \cite{Mar}, for $\ka\geq \ka_0$ large enough, there exists a 
lower-semicontinuous function 
$$
\beps_0:\jS^\ka\to\R^+
$$ 
with positive values on an open dense subset of $\jS^\ka$ such that for $f\in\jB^\ka(\beps_0)$
the system
\begin{equation}
H(\th,r)=h(r)+ f(\th,r)
\end{equation}
admits an admissible chain $(\jC_k)_{1\leq k\leq k_*}$, with $(\de/2)$-bounded homoclinic
correspondence, such that each open set $\T^3\times O_k$ contains the $\de$-neighborhood
in $\A^3$ of some essential torus of the chain.  By Theorem~\ref{thm:main2} there exists $\cH$,
arbitrarily $C^\ka$-close to $H$, such that $(\jC_k)_{1\leq k\leq k_*}$ is a good chain with
$\de$-bounded homoclinic correspondence. By \cite{GM} there exists an orbit for $\cH$
which is $\de$-admissible, and therefore which intersects each $\T^3\times O_i$. This
last property being open, Theorem~\ref{thm:main1} is proved.

\newpage

\appendix

\section{Normal hyperbolicity and symplectic geometry}\label{app:normhyp}
\setcounter{paraga}{0}

We refer to \cite{Berg10,BB13,C04,C08,HPS} for the references on normal hyperbolicity.
 Here we limit ourselves to a very simple class of systems which
admit a normally hyperbolic invariant (non compact) submanifold, which serves us as a model from which
all other definitions and properties will be deduced.

\paraga  The following statement is a simple version of the persistence theorem for normally 
hyperbolic manifolds well-adapted to our setting, whose germ can be found in \cite{B10} and whose proof
can be deduced from the previous references.

\vskip3mm

\noindent {\bf The normally hyperbolic persistence theorem.} 
{\em Fix $m\geq 1$ and consider a vector field on $\R^{m+2}$ of the form 
$\jV=\jV_0+\jF$,
with $\jV_0$ and $\jF$ of class $C^1$ and reads
\begin{equation}\label{eq:formV0}
\dot x=X(x,u,s),\qquad \dot u=\la_u(x)\, u,\qquad \dot s =-\la_s(x)\,s,
\end{equation}
for $(x,u,s)\in \R^{m+2}$. Assume moreover that there exists $\la>0$ such that
the inequalities
\beq\label{eq:ineg}
\la_u(x)\geq \la,\quad{\it and}\quad \la_s(x)\geq \la,\qquad  x\in\R^m.
\eeq
hold. Fix a constant $\mu>0$.
Then there exists a constant $\de_*>0$ such that if 
\begin{equation}\label{eq:condder}
\norm{\partial _xX}_{C^0(\R^{m+2})}\leq \de_*,\qquad 
\norm{\jF}_{C^1(\R^{m+2})}\leq \de_*,
\end{equation}
the following assertions hold.
\begin{itemize}
\item The maximal invariant set for $\jV$  contained in $O=\big\{(x,u,s)\in\R^{m+2}\mid \norm{(u,s)}\leq \mu\big\}$
is an $m$-dimensional manifold $\Ann(\jV)$ which admits the graph representation:
$$
\Ann(\jV)=\big\{\big(x,u=U(x), s=S(x)\big)\mid x\in\R^m\big\},
$$ 
where $U$ and $S$ are $C^1$ maps $\R^m\to\R$  
such that \begin{equation}\label{eq:loc}
\norm{(U,S)}_{C^0(\R^m)}\leq \frac{2}{\la}\,\norm{\jF}_{C^0}.
\end{equation}
\item The maximal positively invariant set for $\jV$  contained in  $O$ 
is an $(m+1)$-dimensional manifold $W^+\big(\Ann(\jV)\big)$ which admits the graph representation:
$$
W^+\big(\Ann(\jV)\big)=\big\{\big(x,u=U^+(x,s), s\big)\mid x\in\R^m\ s\in[-\mu,\mu]\big\},
$$ 
where $U^+$ is a $C^1$ map $\R^m\times[-1,1]\to\R$  
such that \begin{equation}\label{eq:loc2}
\norm{U^+}_{C^0(\R^m)}\leq c_+\,\norm{\jF}_{C^0}.
\end{equation}
for a suitable $c_+>0$.
Moreover, there exists $C>0$ such that for $w\in W^+\big(\Ann(\jV)\big)$, 
\beq
\dist\big(\Phi^t(w),\Ann(\jV)\big)\leq C\exp(-\la t),\qquad t\geq0.
\eeq
\item The maximal negatively invariant set for $\jV$  contained in $O$
is an $(m+1)$-dimensional manifold $W^-\big(\Ann(\jV)\big)$ which admits the graph representation:
$$
W^-\big(\Ann(\jV)\big)=\big\{\big(x,u, s=S^-(x,u)\big)\mid x\in\R^m,\ u\in[-\mu,\mu]\big\},
$$ 
where $S^-$ is a $C^1$ map $\R^m\times[-1,1]\to\R$  
such that \begin{equation}\label{eq:loc3}
\norm{S^-}_{C^0(\R^m)}\leq c_-\,\norm{\jF}_{C^0}.
\end{equation}
for a suitable $c_->0$.
Moreover, there exists $C>0$ such that for $w\in W^-\big(\Ann(\jV)\big)$, 
\beq
\dist\big(\Phi^t(w),\Ann(\jV)\big)\leq C\exp(\la t),\qquad t\leq0.
\eeq
\item
The manifolds $W^\pm\big(\Ann(\jV)\big)$ admit $C^0$ foliations $\big(W^\pm(x)\big)_{x\in \Ann(\jV)}$ such that 
for $w\in W^\pm(x)$
\beq
\dist\big(\Phi^t(w),\Phi^t(x)\big)\leq C\exp(\pm\la t),\qquad t\geq0.
\eeq
\item If  moreover $\jV_0$ and $\jF$ are of class $C^p$, $p\geq1$, and if in addition of the 
previous conditions the domination inequality, the condition
\beq\label{eq:addcond}
p\,\norm{\partial_x X}_{C^0(\R^m)}\leq \la
\eeq
holds,  then the functions $U$, $S$, $U^+$, $S^-$ are of class $C^p$ and 
\beq
\norm{(U,S)}_{C^p(\R^m)}\leq C_p \norm{\jF}_{C^p(\R^{m+2})}.
\eeq
for a suitable constant $C_p>0$.
\item Assume moreover that the vector fields $\jV_0,\jV$ are $R$-periodic in $x$, where $R$ is a lattice in $\R^m$.
Then their flows and the manifolds $\Ann(\jV)$ and $W^\pm\big(\Ann(\jV)\big)$ pass to the quotient $(\R^m/R)\times \R^2$
Assume that the time-one map of $\jV_0$ on $\R^m/R\times\{0\}$ is $C^0$ bounded by a constant $M$.
Then, with the previous assumptions, the constant $C_p$ depends only on $p$, $\la$ and $M$.
\end{itemize}
}

The last statement will be applied in the case where $m=2\ell$ and $R=c\Z^\ell\times\{0\}$, where $c$ is a positive constant,
so that the quotient $\R^{2\ell}/R$ is diffeomorphic to the annulus $\A^\ell$.

\paraga The following result describes the symplectic geometry of our system in the case where $\jV$
is a Hamiltonian vector field. We keep the notation of the previous theorem.

\vskip3mm

\noindent {\bf The symplectic normally hyperbolic persistence theorem.}  {\it Endow $\R^{2m+2}$ with a symplectic form
$\Om$ such that there exists a constant $C>0$ such that  for all $z\in O$
\beq\label{eq:assumpsymp}
\abs{\Om(v,w)}\leq C\norm{v}\norm{w},\qquad \forall v,w \in T_z M.
\eeq
Let $\jH_0$ be a $C^2$ Hamiltonian on $\R^{2m+2}$ whose Hamiltonian vector field $\jV_0$ satisfies (\ref{eq:formV0})
with conditions (\ref{eq:ineg}), and consider a Hamiltonian $\jH=\jH_0+\jP$.
Then there exists a constant $\de_*>0$ such that if 
\begin{equation}\label{eq:condder2}
\norm{\partial _xX}_{C^0(\R^{m+2})}\leq \de_*,\qquad 
\norm{\jP}_{C^2(\R^{m+2})}\leq \de_*,
\end{equation}
the following properties hold.
\begin{itemize}
\item The manifold $\Ann(\jV)$ is $\Om$-symplectic.
\item The manifolds $W^\pm\big(\Ann(\jV)\big)$ are coisotropic and the center-stable and center-unstab\-le
foliations $\big(W^\pm(x)\big)_{x\in \Ann(\jV)}$ coincide with the characteristic foliations 
of the manifolds $W^\pm\big(\Ann(\jV)\big)$.
\item If $\jH$ is $C^{p+1}$ and condition (\ref{eq:addcond}) is satisfied, then
$W^\pm\big(\Ann(\jV)\big)$ are of class $C^p$ and the foliations $\big(W^\pm(x)\big)_{x\in \Ann(\jV)}$
are of class $C^{p-1}$.
\item There exists a neighborhood $\jO$ of $\Ann(\jV)$ and a symplectic straightening symplectic diffeomorphism 
$\Psi:\jO\to O$ such that
\beq
\begin{array}{lll}
\Psi\big(\Ann(\jV)\big)=\A^{\ell}\times\{(0,0)\};\\[4pt]
\Psi\big(W^-\big(\Ann(\jV)\big)\big)\subset\A^{\ell}\times\big(\R\times\{0\}\big),\qquad
\Psi\big(W^-\big(\Ann(\jV)\big)\big)\subset\A^{\ell}\times\big(\{0\}\times\R\big);\\[4pt]
\Psi\big(W^-(x)\big)\subset\{\Psi(x)\}\times\big(\R\times\{0\}\big),\qquad
\Psi\big(W^+(x)\big)\subset\{\Psi(x)\}\times\big(\{0\}\times\R\big).\\
\end{array}
\eeq
\end{itemize}
}

See \cite{Mar} for a proof.

\newpage

%
%
%
%

%


\end{document}